\numberwithin{equation}{section}
\newtheorem{thm}{Theorem}[section]
\newtheorem{lem}[thm]{Lemma}
\newtheorem{prop}[thm]{Proposition}
\theoremstyle{definition}
\newtheorem{rem}[thm]{Remark}
\newcommand\R{{\mathbb R}}
\newcommand\C{{\mathbb C}}
\newcommand\N{{\mathbb N}}
\newcommand\Tma{T_{\mathrm{max}}}
\newcommand\Tstar{T^\star }
\newcommand\Srn{{\mathcal S}(\R^N )}
\newcommand\Imqu{{m}}
\newcommand\Imqd{{n}}
\newcommand\Imqt{{k}}
\newcommand\Imqc{{j}}
\newcommand\Imqs{{\mu}}
\newcommand\Imqp{{\nu}}
\newcommand\Imqh{{J}}
\newcommand\Spa{{\mathcal X}}
\newcommand\CST[1]{C_{#1}}
\newcommand\MScN[1]{\href{http://www.ams.org/mathscinet-getitem?mr=#1}{\nolinkurl{(#1)}}}
\newcommand\DOI[1]{\href{http://dx.doi.org/#1}{(doi: \nolinkurl{#1})}}
\newcommand\LINK[1]{\href{#1}{(link: \nolinkurl{#1})}}
\newcommand\DIb{v_0 }
\newcommand\goto{\mathop{\longrightarrow}}
\newcommand\la{\langle}
\newcommand\ra{\rangle}
\begin{document}

\title[Schr\"odinger equation with nonlinear subcritical dissipation]{Asymptotic behavior for a Schr\"odinger equation with nonlinear subcritical dissipation}

\author[T. Cazenave]{Thierry Cazenave$^1$}
\email{\href{mailto:thierry.cazenave@sorbonne-universite.fr}{thierry.cazenave@sorbonne-universite.fr}}

\author[Z. Han]{Zheng Han$^2$}
\email{\href{mailto:hanzh_0102@hznu.edu.cn}{hanzh\_0102@hznu.edu.cn}}

\address{$^1$Sorbonne Universit\'e \& CNRS, Laboratoire Jacques-Louis Lions,
B.C. 187, 4 place Jussieu, 75252 Paris Cedex 05, France}

\address{$^2$Department of Mathematics, Hangzhou Normal University, Hangzhou, 311121, China}

\thanks{ZH thanks NSFC 11671353,11401153, Zhejiang Provincial Natural Science Foundation of China under Grant No. LY18A010025, and CSC for their financial support; and the Laboratoire Jacques-Louis Lions for its kind hospitality}

\subjclass[2010]
{
35Q55, 
35B40, 
}
\keywords{Nonlinear Schr\"odinger equation; Subcritical dissipative nonlinearity; Asymptotic behavior}

\begin{abstract}
 We study the time-asymptotic behavior of solutions of the Schr\"odinger equation with nonlinear dissipation
\begin{equation*} 
 \partial _t u = i \Delta u + \lambda |u|^\alpha u
\end{equation*} 
in $\R^N $, $N\geq1$, where $\lambda\in\C$, $\Re \lambda <0$ and $0<\alpha<\frac2N$. We give a precise description of the behavior of the solutions (including decay rates in $L^2$ and $L^\infty $, and asymptotic profile), for a class of arbitrarily large initial data, under the additional assumption that $\alpha $ is sufficiently close to  $\frac2N$.
\end{abstract}

\maketitle

\section{Introduction}

In this paper, we consider the  Schr\"odinger equation with nonlinear dissipation
\begin{equation} \label{NLS-0}
\begin{cases} 
 \partial _t u = i \Delta u + \lambda |u|^\alpha u,\\
       u(0,x)=u_0,
\end{cases} 
\end{equation}
where  $\lambda\in\C$ with 
\begin{equation} \label{fAssLam} 
\Re\lambda<0
\end{equation} 
and $0<\alpha<\frac2N$.

Equation~\eqref{NLS-0} is itself a particular case of the more general complex Ginzburg-Landau equation on $\R^N$:
$ u_t=e^{i\theta}\Delta u+ z |u|^\alpha u $, 
where $- \frac {\pi } {2} \leq\theta\leq\frac\pi2$, $z\in \C$ and $\alpha>0$, which is a generic modulation equation describing the nonlinear evolution of patterns at near-critical conditions (see e.g.~\cite{SteStu,CrHo, Mielke}). 

Equation~\eqref{NLS-0} is mass-subcritical, and is globally well-posed in $L^2 (\R^N ) $ and $H^1 (\R^N ) $. See Proposotion~\ref{eRemLWP1} below.

Concerning the large-time asymptotic behavior of the solutions of~\eqref{NLS-0} under assumption~\eqref{fAssLam},  $\alpha=\frac2N$ is a limiting case. Indeed, if $\alpha>\frac2N$, $\lambda\in \C$, then a large set of initial values produces solutions that scatter as $t\to \infty $, i.e. that are asymptotic to a solution of the free Schr\"odinger equation. (See~\cite{Strauss2, GV2,GV1,CW,GOV, NaOz, CCDW, CN1}.) 

If $\alpha\leq\frac2N$, then in many cases solutions are known to decay faster than the solutions of the free Schr\"odinger equation. If $\alpha = \frac {2} {N}$, then for a large class of initial values, the solutions of~\eqref{NLS-0}    can be described by an asymptotic formula, and have the decay rate  $(t\log t)^{-\frac N2}$. 
See~\cite{Shi, KiSh, CN2}. 
In addition, for some solutions,
\begin{equation*} 
(t\log t)^{ \frac N2}  \| u(t) \| _{ L^\infty  } \goto  _{ t\to \infty  } (\alpha  | \Re \lambda |)^{ - \frac {N} {2}}.
\end{equation*} 
See~\cite{CN2}. 

In the one-dimensional case $N=1$, if $\alpha< 2$ is sufficiently close to $2$ and
\begin{equation}\label{res-lam}
\frac{\alpha}{2\sqrt{\alpha+1}}|\Im \lambda|\leq|\Re \lambda|,
\end{equation}
then the large-time asymptotic behavior of solutions can be described for any initial data in $H^1 (\R ) \cap L^2 (\R,  |x|^2 dx )  $, and the  solutions satisfy
\begin{equation}\label{uppbd-u}
\|u(t)\|_{L^\infty}\lesssim t^{-\frac1\alpha},
\end{equation}
see~\cite{KiSh}. 
In addition, in any space dimension $N\ge 1$, under assumption~\eqref{res-lam} and for $\alpha <\frac {2} {N}$ sufficiently close to $\frac {2} {N}$, all solutions with initial value in $H^1 (\R^N ) \cap L^2 (\R ^N ,  |x|^2 dx )  $ satisfy $ \| u(t)\| _{ L^2 } \lesssim t^{-(\frac {1} {\alpha } - \frac {N} {2})q }$ for all $q < \frac {2} {N+2}$, $q\le \frac {1} {2}$. See~\cite{HLN1}. 

In space dimensions $N=1,2,3$ without the condition~\eqref{res-lam},  and for $\alpha <\frac {2} {N}$ sufficiently close to $\frac {2} {N}$, the upper estimate~\eqref{uppbd-u}, as well as lower estimates, is established for sufficiently small initial data in a certain space. See~\cite{HLN2}.

Our purpose in this article is to complete the previous results for~\eqref{NLS-0}, and describe the large-time asymptotic behavior of the solutions for a class of arbitrarily large initial data. In order to state our result, we recall the definition of the space $\Spa $ introduced~\cite{CN1}, which we use in a essential way.
We consider three integers $\Imqt ,\Imqu , \Imqd $ such that
\begin{equation} \label{def-knm} 
\Imqt > \frac {N} {2}, \quad \Imqd > \max \Bigl\{ \frac {N} {2} +1,  \frac {N (N+1)} {4 } \Bigr\}, \quad  2 \Imqu \ge \Imqt + \Imqd +1
\end{equation} 
and we let
\begin{equation} \label{def-J} 
\Imqh = 2\Imqu +2 + \Imqt+ \Imqd .
\end{equation} 
We define the space $\Spa$ by
\begin{equation} \label{fSpa1} 
\begin{split} 
\Spa=  \{ u\in H^\Imqh  (\R^N ); & \, \langle x\rangle ^\Imqd D^\beta u \in L^\infty  (\R^N )  \text{ for  }  0\le  |\beta |\le 2\Imqu , \\   \langle x\rangle ^\Imqd D^\beta u  \in L^2 & (\R^N )  \text{ for  }   2\Imqu +1 \le  |\beta | \le 2\Imqu +2 + \Imqt,  \\  \langle x\rangle ^{\Imqh -  |\beta |} D^\beta u & \in L^2  (\R^N )  \text{ for  }    2\Imqu +2 + \Imqt <  |\beta | \le J \}
\end{split} 
\end{equation} 
and we equip $\Spa$ with the norm
\begin{equation} \label{fSpa2}
 \| u \|_\Spa =  \sum_{ \Imqc =0 }^{2\Imqu  }   \sup _{  |\beta |=    \Imqc }  \| \langle x\rangle ^\Imqd D^\beta  u  \| _{ L^\infty  } +   \sum_{\Imqp =0 }^{\Imqt +1} \sum_{ \Imqs =0 }^\Imqd \sum_{  |\beta  |=\Imqp + \Imqs  +  2 \Imqu  +1  }   \| \langle x \rangle ^{\Imqd -\Imqs } D^\beta  u \| _{ L^2 } 
\end{equation} 
where
\[
\la x\ra=(1+|x|^2)^\frac12 .
\]
In particular,  $(\Spa,  \| \cdot  \|_\Spa )$ is a Banach space and $\Spa \hookrightarrow H^J (\R^N ) $.

Our main result is the following.

\begin{thm}\label{ethm1}
Let $\lambda\in\C$ satisfy~\eqref{fAssLam}, assume~\eqref{def-knm}-\eqref{def-J} and let $ \Spa $ be defined by~\eqref{fSpa1}-\eqref{fSpa2}. 
Given any $K>1$, there exist $\frac {2} {N+1}< \alpha _1< \frac {2} {N}$ and $b_1>0$ with the following property. 
Let $\alpha _1 \le \alpha < \frac {2} {N}$. 
Suppose $u_0(x)=e^{i\frac{b|x|^2}{4}}v_0(x)$, where $b \ge  b_1$ and $v_0\in \Spa $ satisfies
\begin{equation}\label{ID}
\inf\limits_{x\in\R^N}\la x\ra^n|v_0(x)|>0.
\end{equation}
and
\begin{equation}\label{IV}
\|v_0\|_{ \Spa }+\Big(  \inf\limits_{x\in\R^N}\la x\ra^n|v_0(x)|\Big)^{-1}\leq K. 
\end{equation}
It follows that the corresponding solution $u\in C([0,\infty),H^1 (\R^N ) )$ of~\eqref{NLS-0} belongs to $L^\infty  ((0, \infty ) \times \R^N  ) $. Moreover,  there exist $f_0, \omega_0\in L^\infty (\R^N )$, with $f_0$ real-valued and $\|f_0\|_{L^\infty}\leq\frac12$,  and $\la \cdot \ra^n\omega_0\in L^\infty(\R^N)$, such that
\begin{equation}\label{asy-u}
t ^{\frac {1} {\alpha } - \frac {N} {2}}\|u( t )-z(t )\|_{L^2}+ t^{\frac 1 \alpha }\|u(t )-z(t )\|_{L^\infty}\leq C t ^{-( \frac {N+1} {2} - \frac {1} {\alpha })},
\end{equation}
where
\begin{equation}
z(t,x)=(1+bt)^{-\frac{N}{2}}e^{i\Theta(t,x)}\Psi  \Bigl(  t,\frac{x}{1+bt}  \Bigr) \omega_0  \Bigl(  \frac{x}{1+bt}  \Bigr)
\end{equation}
with
\begin{equation}
\Theta(t,x)=\frac{b|x|^2}{4(1+bt)}- \frac{ \Im \lambda}{ \Re \lambda}\log  \Bigl(  \Psi  \Bigl(  t,\frac{x}{1+bt}  \Bigr)  \Bigr)
\end{equation}
and
\begin{equation}\label{def-Psi}
\Psi(t,y)=\Big( \frac{1+f_0(y)}{1+f_0(y)+\frac{ 2 \alpha | \Re \lambda|}{b ( 2-N\alpha ) }|v_0(y)|^\alpha[(1+bt)^{\frac{2-N\alpha}{2}}-1]}\Big)^\frac1\alpha.
\end{equation}
Moreover,
\begin{equation} \label{fDscromzero-u} 
 |\omega _0 |^\alpha = \frac { |v_0 |^\alpha } {1+ f_0},
\end{equation} 
so that $\frac {2} {3}  | v_0|^\alpha  \le  |\omega _0|^\alpha  \le 2 | v_0|^\alpha $.
Furthermore,
\begin{equation}\label{decay-u}
t\|u(t)\|_{L^\infty}^\alpha  \goto_{ t\uparrow\infty }\frac{2-N\alpha}{2\alpha| \Re \lambda|},
\end{equation}
and 
\begin{equation}\label{decay-l2-u}
t^{- (\frac {1} {\alpha } - \frac {N} {2}) ( 1- \frac {N} {2n} )} \lesssim \|u(t)\|_{L^2} \lesssim t^{- (\frac {1} {\alpha } - \frac {N} {2}) ( 1- \frac {N} {2n} )}
\end{equation}
as $t\to \infty $, where $n$ is given by~\eqref{def-knm}.  
\end{thm}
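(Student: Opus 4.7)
The plan is to reduce~\eqref{NLS-0} to a near-ODE problem via a pseudo-conformal change of variables tuned to the phase $e^{ib|x|^2/4}$ in the initial data. Set $y=x/(1+bt)$ and define $w$ by
\[
u(t,x) = (1+bt)^{-N/2}\, e^{ib|x|^2/(4(1+bt))}\, w(t,y).
\]
A direct calculation converts~\eqref{NLS-0} into
\[
\partial_t w = \tfrac{i}{(1+bt)^2}\Delta_y w + \lambda (1+bt)^{-N\alpha/2}|w|^\alpha w,\qquad w(0,y)=v_0(y).
\]
For large $b$ the dispersive term carries the small coefficient $(1+bt)^{-2}$; since $0<N\alpha/2<1$ the nonlinear coefficient is not time-integrable, but its primitive is $\sim b^{-N\alpha/2}(1+bt)^{(2-N\alpha)/2}$, which is controlled by the combination of large $b$ and closeness of $\alpha$ to $2/N$.

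The next step is to construct an explicit reference profile $W$ solving the ``ODE part'' of the transformed equation, $\partial_t W = \lambda(1+bt)^{-N\alpha/2}|W|^\alpha W$. Separating modulus and phase, the modulus satisfies $\partial_t|W|^\alpha = -\alpha|\Re\lambda|(1+bt)^{-N\alpha/2}|W|^{2\alpha}$, a Bernoulli ODE that integrates explicitly to $|W|=\Psi|\omega_0|$ with $\Psi$ given by~\eqref{def-Psi}; the phase acquires the standard logarithmic correction $-(\Im\lambda/\Re\lambda)\log\Psi$ coming from the imaginary part of $\lambda$. The auxiliary real function $f_0$ and the profile $\omega_0$ are defined by a matching procedure aligning $W$ with the true solution $w$, which leads to the relation~\eqref{fDscromzero-u}; the lower bound~\eqref{ID} and the $\Spa$-bound~\eqref{IV} ensure that this procedure produces $\omega_0$ and $f_0$ with the weighted regularity required downstream, with $\|f_0\|_{L^\infty}\le\tfrac12$. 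Pulling $W$ back through the pseudo-conformal change of variables reproduces exactly the ansatz $z$ of the statement.

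The core step is an estimate on the error $r=w-W$, which satisfies
\[
\partial_t r = \tfrac{i}{(1+bt)^2}\Delta_y r + \lambda(1+bt)^{-N\alpha/2}\bigl(|w|^\alpha w-|W|^\alpha W\bigr) + \tfrac{i}{(1+bt)^2}\Delta_y W.
\]
The source $(1+bt)^{-2}\Delta_y W$ is controlled by the weighted regularity of $W$, which is inherited from $\omega_0,f_0\in \Spa$; the specific indices $k,m,n,J$ in~\eqref{def-knm}--\eqref{def-J} are chosen precisely so that enough weighted derivatives of $\Psi\omega_0$ lie in the spaces needed for the argument. A Duhamel-based continuity/fixed-point argument in weighted norms modelled on $\Spa$ then closes provided $b\ge b_1$ (making the dispersive coefficient small) and $\alpha\ge\alpha_1$ close to $2/N$ (making the accumulated nonlinear coupling small). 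Translating back to $u$ yields~\eqref{asy-u}; the displayed exponent $-((N+1)/2-1/\alpha)$ is negative iff $\alpha>2/(N+1)$, which is the role of the lower bound on $\alpha_1$.

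Finally, the statements~\eqref{decay-u} and~\eqref{decay-l2-u} follow from the explicit form of $z$. For~\eqref{decay-u}: at any $y$ with $|\omega_0(y)|\ne 0$, the formula~\eqref{def-Psi} gives $\Psi(t,y)^\alpha\sim \tfrac{b(2-N\alpha)}{2\alpha|\Re\lambda|}|\omega_0(y)|^{-\alpha}(1+bt)^{-(2-N\alpha)/2}$, and combined with the prefactor $(1+bt)^{-N/2}$ this yields $(1+bt)^{-N/2}\Psi(t,y)|\omega_0(y)|\sim\bigl(\tfrac{2-N\alpha}{2\alpha|\Re\lambda|}\bigr)^{1/\alpha} t^{-1/\alpha}$ as $t\to\infty$; taking the supremum over $y$ and using~\eqref{asy-u} to replace $u$ by $z$ gives the stated limit. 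For~\eqref{decay-l2-u}: a layer analysis of $\|\Psi\omega_0\|_{L^2}$, splitting $\R^N$ at the scale $\la y\ra\sim (1+bt)^{(2-N\alpha)/(2n\alpha)}$ (at which $\Psi$ transitions from small to $\approx 1$) and using $|\omega_0|\sim\la y\ra^{-n}$ at infinity, produces matching inner and outer contributions that give the two-sided bound with exponent $(\tfrac1\alpha-\tfrac N2)(1-\tfrac N{2n})$. The hard part is the error fixed-point of the third step: because $(1+bt)^{-N\alpha/2}$ is not time-integrable, one cannot appeal to a purely perturbative small-data theory, and the argument must exploit the smallness from large $b$ together with the closeness $\alpha\to 2/N$, while maintaining enough weighted regularity of $W$ to handle $\Delta_y W$ as a source.
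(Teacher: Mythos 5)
Your overall architecture (pseudo-conformal change of variables, explicit ODE profile with modulus $\Psi|\omega_0|$ and logarithmic phase correction, error estimate requiring $b$ large and $\alpha$ close to $\frac2N$, and the final layer analysis for the $L^2$ rate) matches the paper's, but the step you yourself flag as ``the hard part'' contains a genuine gap, and the way you propose to close it would fail. You assert that the primitive of the nonlinear coefficient, $\sim b^{-1}(2-N\alpha)^{-1}(1+bt)^{(2-N\alpha)/2}$, ``is controlled by the combination of large $b$ and closeness of $\alpha$ to $2/N$.'' It is not: for every fixed $\alpha<\frac2N$ and every $b$ this primitive tends to $+\infty$ as $t\to\infty$ (and as $\alpha\uparrow\frac2N$ it tends to $b^{-1}\log(1+bt)$, still unbounded), so a Duhamel/fixed-point argument based on smallness of the accumulated coupling cannot close, no matter how $b$ and $\alpha$ are tuned. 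The missing idea is that the dissipation $\Re\lambda<0$ forces a pointwise decay of the solution itself: integrating the exact identity $-\frac1\alpha\partial_t(|w|^{-\alpha})=|w|^{-\alpha-1}L+\Re\lambda\,(1+bt)^{-N\alpha/2}\cdot(\text{coefficient})$ (with $L$ the dispersive remainder) yields $\|w(t)\|_{L^\infty}^\alpha\lesssim \frac{b(2-N\alpha)}{2\alpha|\Re\lambda|}(1+bt)^{-\frac{2-N\alpha}{2}}$ away from $t=0$ (Lemma 3.1 of the paper). It is the product of this decay with the non-integrable coefficient that becomes $\sim\frac{2-N\alpha}{\alpha|\Re\lambda|}(1+bt)^{-1}$, and after integration against the allowed growth rates $(1-bt)^{-\sigma_j}$ of the seminorms one obtains the small multiplicative constant $\frac{2-N\alpha}{\sigma_1\alpha|\Re\lambda|}$, which is where the requirement $\alpha\ge\alpha_1$ actually enters (see \eqref{fDfnalphaun}). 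Without this mechanism your continuity argument has no source of smallness.

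A second, structural problem is circularity: you define the reference profile $W$ from $\omega_0$ and $f_0$ and then run a fixed point for $r=w-W$, but $\omega_0$ and $f_0$ are produced by a ``matching procedure'' that requires knowing the solution on the whole time interval; as written they are not available before the error estimate is performed. The paper avoids this by first proving a priori bounds on $v$ itself in the graded seminorms $\Phi_{1,T},\dots,\Phi_{4,T}$ (including propagation of the lower bound $\inf_x\la x\ra^n|v|>0$, which you do not address but which is indispensable to differentiate the non-smooth nonlinearity $|v|^\alpha v$ up to order $J$), and only afterwards extracting $f_0$ as the $t\uparrow\frac1b$ limit of the explicit integral $f(t,x)=-\alpha\int_0^t|v_0|^\alpha|v|^{-\alpha-1}L\,ds$ and $\omega_0$ as the limit of $\omega=v\psi^{-1}e^{-i\theta}$, whose time derivative is integrable by the already-established bounds. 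Note that the same modulus identity simultaneously furnishes the decay estimate of the first paragraph and the definition of $f_0$; recognizing this single computation as the pivot of the whole proof is what your proposal is missing. Your remarks on the sign of the exponent in \eqref{asy-u} (forcing $\alpha_1>\frac{2}{N+1}$) and the derivations of \eqref{decay-u} and \eqref{decay-l2-u} from the explicit profile are correct and agree with the paper.
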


\begin{rem} \label{eRem2} 
Here are some comments on Theorem~\ref{ethm1}.
\begin{enumerate}[{\rm (i)}] 

\item \label{eRem2:2} 
We have $v_0\in \Spa \hookrightarrow L^2 (\R^N ) $,  $\Spa \hookrightarrow H^1 (\R^N ) $, and $\Spa \hookrightarrow L^2 (\R^N ,  |x|^2 dx) $ (because $n> \frac {N} {2} +1$), so that $u_0\in H^1 (\R^N ) $. Therefore,  the solution $u\in C([0,\infty), H^1 (\R^N ) )$ of~\eqref{NLS-0} is well defined, see Proposotion~\ref{eRemLWP1}.
Moreover, $u$ is smoother than stated. Indeed, $u$ is given by the pseudo-conformal transformation~\eqref{fPseudoConf2} in terms of a solution $v\in C([0, \frac {1} {b}), \Spa )$  of equation~\eqref{NLS-1}. In particular,  $u$ is a classical solution of~\eqref{NLS-0} ($C^1$ in $t$ and $C^2$ in $x$).

\item \label{eRem2:3} 
Theorem~\ref{ethm1} is valid in any dimension $N\geq1$ and for any $\lambda\in\C$ with $\Re \lambda<0$. In particular, we do not require assumption~\eqref{res-lam}. The main restrictions are that $\alpha $ must be sufficiently close to $\frac {2} {N}$ and that the initial value must be bounded from below in the sense~\eqref{ID} and sufficiently oscillatory in the sense that $b$ must be sufficiently large. Moreover, how close $\alpha $ must be to $\frac {2} {N}$ depends on a certain bound on the initial value through~\eqref{IV}. 
On the other hand, there is no restriction on the size of $u_0$.

\item \label{eRem2:1} 
A typical initial value which is admissible in Theorem~\ref{ethm1} is  $\DIb = c  \langle \cdot \rangle ^{-n} + \varphi $ with $c\in \C$, $c\not = 0$, and $\varphi \in \Srn$, $ |\varphi | \le ( |c| -\varepsilon )  \langle \cdot \rangle ^{-n} $, $\varepsilon >0$.
Indeed, it is easy to check that $\DIb \in \Spa$ and $\DIb$ satisfies~\eqref{ID}. 
Then $K$ must be chosen sufficiently large so that~\eqref{IV} holds and $\alpha $ sufficiently close to $\frac {2} {N}$.
Note that any value of $n$ sufficiently large so that the second condition in~\eqref{def-knm} is satisfied, is admissible. 

\item \label{eRem2:4} 
The limit~\eqref{decay-u} gives the exact decay rate of $ \| u(t)\| _{ L^\infty  }$. Note that this limit is independent of the initial value $u_0$. The reason for this is that~\eqref{decay-u}  is equivalent to~\eqref{converge}, and that the solutions of the ODE $z'= \lambda (1-bt)^{-\frac {4- N\alpha } {2}}  |z|^\alpha z$ satisfy $(1-bt)^{-\frac{2-N\alpha}{2}}  |z(t)|^\alpha \to \frac{b (2-N\alpha ) }{ 2 \alpha| \Re \lambda|}$ as $t\uparrow \frac {1} {b}$, independently of the initial value $z(0)$. 

\item 
It follows from~\eqref{decay-l2-u} that $ \|u(t)\|_{L^2} $ is equivalent to $ t^{- (\frac {1} {\alpha } - \frac {N} {2}) ( 1- \frac {N} {2n} )} $. 
With respect to the results in~\cite{HLN1}, \eqref{decay-l2-u} gives the exact decay rate of $ \|u(t)\|_{L^2} $.
As opposed to the decay rate of $ \| u(t)\| _{ L^\infty  }$, which is $t^{-\frac {1} {\alpha }}$ (hence independent of the solution), the decay rate of $ \|u(t)\|_{L^2} $ does depend on the solution, through the parameter $n$ which can be chosen (as long as it is sufficiently large).

\item \label{eRem2:5} 
It follows from~\eqref{decay-u} and~\eqref{decay-l2-u} that 
\begin{equation*} 
\liminf _{ t\to \infty  }  t^{\frac 1 \alpha }\|u(t ) \|_{L^\infty} >0 , \quad   t ^{\frac {1} {\alpha } - \frac {N} {2}}\|u( t ) \|_{L^2} \goto _{ t\to \infty  } \infty  .
\end{equation*} 
Thus we see  that the asymptotic behavior of $u (t)$ as $t\to \infty $ is described by the function $z(t)$ via the estimate~\eqref{asy-u}.
Note that the functions $f_0$ and $\Psi $ are both real-valued, and that $\frac {1} {2} \le 1+ f_0 \le \frac {3} {2}$
 and $0< \Psi \le 1$. 
The function $\Theta $ is also real-valued. If $\Im \lambda \le 0$, then $\Theta >0$. If $\Im \lambda >0$, then $\Theta $ takes both positive and negative values. 

\end{enumerate} 
\end{rem} 

\begin{rem}
If $\Re \lambda >0$, then finite-time blowup occurs for equation~\eqref{NLS-0}, at least for $H^1$-subcritical powers $(N-2) \alpha < 4$.  See~\cite{CMZ,CMHZ}. Moreover, if $\alpha < \frac {2} {N}$, then all nontrivial solutions blow up in finite or infinite time, see~\cite{CCDW}. Finite-time blowup also occurs if $\Re \lambda =0$, $\Im \lambda >0$, and $\alpha \ge  \frac {4} {N}$, since in this case~\eqref{NLS-0}  is the focusing NLS. 
If $\Re \lambda <0$, $\alpha > \frac {4} {N}$ and condition~\eqref{res-lam} is not satisfied, then whether or not some solutions of~\eqref{NLS-0} blow up in finite time seems to be an open question. 
\end{rem}

We apply the strategy of~\cite{CN1,CN2} to prove Theorem~\ref{ethm1}. We require the non-vanishing condition~\eqref{ID}, as well as strong decay and regularity of the the initial data to overcome the difficulty of non-smooth nonlinearity and derivative loss in their estimates. This is why the various conditions in the definition of the space $ \Spa $ arise. The other main ingredient is the application of the pseudo-conformal transformation. Given any $b>0$, by the pseudo-conformal transformation
\begin{equation} \label{fForPseuCon} 
v(t,x)= (1-bt) ^{ - \frac {N} {2}} u  \Bigl( \frac {t} {1-bt} , \frac {x} {1-bt} \Bigr) e^{- i \frac { b|x|^2} {4(1-bt)}}\quad t\geq0,\ x\in\R^N,
\end{equation}
equation~\eqref{NLS-0} is equivalent to the nonautonomous equation
\begin{equation}\label{NLS-1}
\begin{cases} 
\partial _t v = i\Delta v+\lambda(1-bt)^{-\frac{4-N\alpha}{2}}|v|^\alpha v,\\
v(0,x)=v_0.
\end{cases} 
\end{equation}
Note that the assumption $\alpha\leq\frac2N$ implies that $(1-bt)^{-\frac{4-N\alpha}{2}}$ is not integrable at $1/b$. As in~\cite{CN2}, we estimate the solution $v(t,x)$ by allowing a certain growth of the various components of the $ \Spa $-norm of the solution,  see~\eqref{def-P1}-\eqref{def-P4}.  Using Duhamel's formula for~\eqref{NLS-1}, i.e.
\begin{equation}\label{Duham-form}
v(t)=e^{it\Delta}v_0 + \lambda\int^t_0(1-bs)^{-\frac{4-N\alpha}{2}}e^{i(t-s)\Delta}|v(s)|^\alpha v(s)\,ds
\end{equation}
and the elementary calculation
\begin{equation} \label{fEIn1} 
\int^t_0(1-bs)^{-1- \nu }\,ds=\frac1{b \nu }[(1-bt)^{- \nu }-1]\leq \frac1{b \nu }(1-bt)^{- \nu },
\end{equation}
we see that if $e^{i(t-s)\Delta}|v(s)|^\alpha v(s)$ is estimated in a certain norm by $(1-bs)^{-\mu}$, then $v(t)$ can be controlled in that norm by $(1-bs)^{-\mu+\frac{2-N\alpha}{2}}$. 
In the case $\alpha = \frac2N$, one obtains the same power $(1-bs)^{-\mu}$, and this can be used to close appropriate estimates. This is the strategy employed in~\cite{CN2}. 
In the present case $\alpha < \frac {2} {N}$, we observe that if $e^{i(t-s)\Delta}|v(s)|^\alpha v(s)$ is estimated in a certain norm by $(1-bs)^{-\mu+\frac{2-N\alpha}{2}}$, then $v(t)$ can be controlled in that norm by $(1-bs)^{-\mu }$. 
We obtain the extra decay by monitoring the decay of  $|v(s)|$ (see Lemma~\ref{lem-Decay-v}). The price to be paid is that the constants that appear in the calculations not only depend on $1/b$, but also on  $\frac{2-N\alpha}{| \Re \lambda|}$. 
Therefore, in order to close the estimates, we are led to require not only that $b$ is large, but also that  $\alpha$ is close to $\frac2N$.

The rest of the paper is organized as follows.  in section~\ref{sPrelim}, we recall some estimates and a local well-posedness result in the space $\Spa$ for equation~\eqref{NLS-1}, taken from~\cite{CN1,CN2}. The crucial estimate of the solutions is carried out in Section~\ref{sEstimates}. Using these estimates, we describe in Section~\ref{sAsympt} the asymptotic behavior of the corresponding solutions of~\eqref{NLS-1}. Finally, we complete the proof of Theorem~\ref{ethm1} in section~\ref{sPrfThm1}, by applying the pseudo-confirmal transformation.

\section{Preliminary} \label{sPrelim} 

We recall some properties of equation~\eqref{NLS-0} which will be useful in the next sections. 
We begin with a global well-posedness result.

\begin{prop} \label{eRemLWP1} 
Let $0<\alpha < \frac {4} {N}$ and let $\lambda \in \C$ satisfy $\Re \lambda \le 0$. It follows that the Cauchy problem~\eqref{NLS-0} is  globally well-posed in $L^2 (\R^N ) $ and in $H^1 (\R^N ) $. 
More precisely, given any $u_0 \in L^2 (\R^N ) $ there exists a solution $u\in C( [0, \infty ), L^2 (\R^N ) ) \cap L^{\alpha +2}  ( [0, \infty ), L^{\alpha +2} (\R^N ) )$ of~\eqref{NLS-0}. 
The solution is unique and depends continuously on $u_0$ in $C ( [0, T ), L^2 (\R^N ) ) \cap L^{\alpha +2}  ((0, T ), L^{\alpha +2} (\R^N ) )$ for every $T>0$. If, in addition, $u_0\in H^1 (\R^N ) $, then $u\in C([0,\infty ), H^1 (\R^N ) )$. 
\end{prop}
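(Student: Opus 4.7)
The plan is to run a Strichartz-based contraction-mapping argument for local existence in $L^2$, and then to use the dissipative structure $\Re\lambda\le 0$ to secure the a priori $L^2$ bound that upgrades local to global existence; the $H^1$ theory follows the same scheme with an extra persistence-of-regularity step.

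For the local $L^2$ theory I would work with the Strichartz admissible pair $(q,r)=\bigl(\tfrac{4(\alpha+2)}{N\alpha},\,\alpha+2\bigr)$; the assumption $\alpha<\tfrac{4}{N}$ is precisely what gives $q>\alpha+2$, producing a positive power $T^\theta$ when the Duhamel nonlinearity is estimated in the dual norm by Hölder in time. Writing
\[
\Phi(u)(t)=e^{it\Delta}u_0+\lambda\int_0^t e^{i(t-s)\Delta}\bigl(|u|^\alpha u\bigr)(s)\,ds,
\]
the Strichartz inequalities together with the pointwise bound $\bigl||u|^\alpha u-|v|^\alpha v\bigr|\lesssim (|u|^\alpha+|v|^\alpha)|u-v|$ show that $\Phi$ is a contraction on a ball of $X_T=C([0,T];L^2)\cap L^q((0,T);L^{\alpha+2})$ for $T$ depending only on $\|u_0\|_{L^2}$. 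This yields the unique local solution, continuous dependence in $X_T$, and the uniqueness class stated in the proposition.

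For global existence I would establish $\|u(t)\|_{L^2}\le\|u_0\|_{L^2}$ via the dissipative identity
\[
\tfrac12\tfrac{d}{dt}\|u(t)\|_{L^2}^2=\Re\lambda\int_{\R^N}|u|^{\alpha+2}\,dx\le 0.
\]
Since $|u|^\alpha u$ is not $C^1$ when $\alpha<1$, this identity must be justified by regularization: approximate $u_0$ in $L^2$ by $H^2$ data, carry out the identity on the smoother solutions, and pass to the limit using continuous dependence together with the uniform $L^{\alpha+2}_tL^{\alpha+2}_x$ bound built into $X_T$. Because the local existence time depends only on $\|u_0\|_{L^2}$, which is non-increasing, iteration yields the global $L^2$ solution in the stated class. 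For $H^1$ data the same fixed-point argument, performed in $C([0,T];H^1)\cap L^q((0,T);W^{1,\alpha+2})$, works since $\alpha<\tfrac{4}{N}$ is comfortably $H^1$-subcritical; global $H^1$ persistence then follows from a Gronwall estimate on $\|u(t)\|_{H^1}$ obtained by differentiating the Duhamel formula, exploiting $\Re\lambda\le 0$ once more, and using the already-established global $L^2$ bound.

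The main obstacle is the low regularity of the nonlinearity when $\alpha<1$, which prevents a direct chain-rule computation in the $H^1$ step and forces an approximation argument for the $L^2$ identity. Both difficulties are handled by the classical Ginibre--Velo smoothing scheme (regularize the nonlinearity to a Lipschitz one, run the estimates on the approximation, pass to the limit), so the proposition ultimately reduces to a careful adaptation of standard references rather than to any new analytic input.
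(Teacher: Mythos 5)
Your proposal is correct and follows essentially the same route as the paper: the paper delegates the local theory (Strichartz-based contraction in $C([0,T];L^2)\cap L^{q}((0,T);L^{\alpha+2})$, uniqueness, continuous dependence, $H^1$ regularity) to Kato's references, and obtains global existence from exactly the dissipation identity $\frac12\frac{d}{dt}\|u\|_{L^2}^2=\Re\lambda\int|u|^{\alpha+2}\le 0$, justified by approximation as in Ozawa. You have merely written out the standard details that the paper cites.
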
 

\begin{proof} 
For the local theory (local existence, uniqueness, continuous dependence, regularity), see  e.g.~\cite{Kato1, Kato2}. For global existence, it is sufficient to estimate the $L^2$ norm of $u$.
Multiplying~\eqref{NLS-0} by $ \overline{u} $, taking the real part and integrating by parts, we obtain
\begin{equation} \label{fIDEC1} 
 \| u(t) \| _{ L^2 }^2 + (- \Re \lambda ) \int _0^t  \| u(s) \| _{ L^{\alpha +2} }^{\alpha +2} =  \| u_0\| _{ L^2 }^2 .
\end{equation} 
(This argument is formal, but~\eqref{fIDEC1} can be proved by standard approximation arguments, see for instance~\cite{Ozawa2}.)
It follows that $u$ is bounded in $L^2 (\R^N ) $. 
\end{proof} 

Next, we recall some estimates for the Schr\"o\-din\-ger  equation in the space $ \Spa $.

\begin{prop}[\cite{CN2}, Propositon~2.1]\label{est-solu}
Assume~\eqref{def-knm}-\eqref{def-J} and let $ \Spa $ be defined by~\eqref{fSpa1}-\eqref{fSpa2}. 
There exists $\CST1\geq1$ such that if $T\geq0,v_0\in \Spa $ and $f\in C([0,T], \Spa )$, then for all $0\leq t\leq T$, the solution $v$ of
\begin{equation*} 
\begin{cases} 
 \partial _t v= i \Delta v + f,\\ v(0,x)=v_0,
\end{cases} 
\end{equation*} 
satisfies the following estimates.
\begin{enumerate}[{\rm (i)}] 
 \item If $|\beta|\leq2m$, then
\begin{equation}\label{est-solu-small-der}
\|\la x\ra^nD^\beta v(t)\|_{L^\infty}\leq\|v_0\|_{ \Spa }+ \CST1\int^t_0(\|v(s)\|_{ \Spa }+\|\la x\ra^nD^\beta f(s)\|_{L^\infty})\,ds.
\end{equation}
  \item If $|\beta|=\nu+\mu+2m+1\ with\ 0\leq\nu\leq k+1\ and\ 0\leq\mu\leq n$, then
\begin{equation}\label{est-solu-lar-der}
\|\la x\ra^{n-\mu}D^\beta v(t)\|_{L^2}\leq\|v_0\|_{ \Spa }+ \CST1\int^t_0(\|v(s)\|_{ \Spa }+\|\la x\ra^{n-\mu}D^\beta f(s)\|_{L^2})\,ds.
\end{equation}
\end{enumerate} 
\end{prop}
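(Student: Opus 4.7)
The plan is to establish both estimates by an energy-method argument applied to $D^\beta v$ multiplied by the appropriate weight, with the time-integrated form coming from pairing the resulting equation against a suitable test function or taking $L^\infty$ directly. Since $D^\beta$ commutes with $\Delta$, applying $D^\beta$ to $\partial_t v = i\Delta v + f$ and then multiplying by $\la x\ra^w$ (with $w=n$ for (i) and $w=n-\mu$ for (ii)) gives
\begin{equation*}
\partial_t(\la x\ra^w D^\beta v) = i\Delta(\la x\ra^w D^\beta v) - i[\Delta, \la x\ra^w] D^\beta v + \la x\ra^w D^\beta f,
\end{equation*}
and the entire problem is to bound the commutator $[\Delta, \la x\ra^w] D^\beta v = (\Delta\la x\ra^w) D^\beta v + 2\nabla\la x\ra^w\cdot\nabla D^\beta v$ by $\|v\|_\Spa$ in the appropriate norm. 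Since $|\nabla \la x\ra^w|\lesssim \la x\ra^{w-1}$ and $|\Delta \la x\ra^w|\lesssim \la x\ra^{w-2}$, the commutator produces terms of the form $\la x\ra^{w-1} D^{\beta+e_i}v$ and $\la x\ra^{w-2}D^\beta v$, and the precise layout of $\Spa$ in~\eqref{fSpa1}--\eqref{fSpa2} is designed exactly to contain such terms.

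For part (ii), I would pair the displayed equation with $\overline{\la x\ra^{n-\mu} D^\beta v}$ and integrate. The leading dispersive term $i\Delta(\la x\ra^{n-\mu}D^\beta v)$ drops after integration by parts since its contribution is purely imaginary. The piece $(\Delta\la x\ra^{n-\mu}) D^\beta v$ of the commutator is real and likewise contributes only imaginary mass, so taking real parts leaves essentially $\Im\int \overline{\la x\ra^{n-\mu}D^\beta v}\,\nabla\la x\ra^{n-\mu}\cdot\nabla D^\beta v$, which by Cauchy--Schwarz is bounded by $\|\la x\ra^{n-\mu}D^\beta v\|_{L^2}\,\|\la x\ra^{n-\mu-1}\nabla D^\beta v\|_{L^2}$. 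The second factor falls inside the $\Spa$-norm after incrementing $\mu\mapsto\mu+1$ (when $\mu<n$) or by appealing to the last family of weighted $L^2$ conditions in~\eqref{fSpa1} (when $\mu=n$, here one uses $J=2m+2+k+n$ to match the index count). Dividing the energy identity by $\|\la x\ra^{n-\mu}D^\beta v\|_{L^2}$ and integrating in time yields~\eqref{est-solu-lar-der}.

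For part (i), the energy method is unavailable in $L^\infty$, so I would instead integrate the displayed equation directly in time and take $L^\infty$ norms:
\begin{equation*}
\|\la x\ra^n D^\beta v(t)\|_{L^\infty}\le \|v_0\|_\Spa+\int_0^t\bigl(\|\Delta(\la x\ra^n D^\beta v)\|_{L^\infty}+\|[\Delta,\la x\ra^n]D^\beta v\|_{L^\infty}+\|\la x\ra^n D^\beta f\|_{L^\infty}\bigr)\,ds.
\end{equation*}
The tricky term $\|\Delta(\la x\ra^n D^\beta v)\|_{L^\infty}$ lies at derivative order $|\beta|+2\le 2m+2$, which is beyond the $L^\infty$-part of the $\Spa$-norm. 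I would dominate it by the weighted Sobolev norm $\|\la x\ra^n D^{\beta+2}v\|_{H^k}$ via the embedding $H^k\hookrightarrow L^\infty$ from~\eqref{def-knm}, then expand $D^\gamma(\la x\ra^n D^{\beta+2}v)$ by Leibniz. The resulting weighted derivatives of $v$ have order at most $2m+2+k$ and weight of order $\la x\ra^{n-|\delta|}$, and these fit into the second family of $L^2$-terms of the $\Spa$-norm. The same argument controls $\|[\Delta,\la x\ra^n]D^\beta v\|_{L^\infty}$.

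The main obstacle will be the bookkeeping at the extremal indices $\nu=k+1$, $\mu=n$, $|\beta|=2m$, where the commutator consumes exactly one unit of weight and one extra derivative. Ensuring that each commutator term still lives in $\Spa$ is precisely what forces the three constraints in~\eqref{def-knm} and the tripartite structure of the $\Spa$-norm (pointwise bounds up to order $2m$, weighted $L^2$ at full weight $\la x\ra^n$ up to order $2m+2+k$, and then a sliding weight $\la x\ra^{J-|\beta|}$). Once these index matches are verified term by term, the Sobolev embedding and integration-in-time arguments described above close the estimates with a single absolute constant $\CST1$.
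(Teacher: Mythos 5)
The paper offers no proof of this statement to compare against: it is quoted verbatim from \cite{CN2} (Proposition~2.1), so your argument can only be judged on its own terms. Your part (ii) is sound and is the expected argument: pairing with $\overline{\langle x\rangle^{n-\mu}D^\beta v}$ kills the skew-adjoint main term and the real zeroth-order piece of the commutator, and the surviving term $\langle x\rangle^{n-\mu-1}\nabla D^\beta v$ sits exactly one step further along the hierarchy $(\nu,\mu)\mapsto(\nu,\mu+1)$ built into~\eqref{fSpa2}. One clarification: at the endpoint $\mu=n$ the weight is $\langle x\rangle^{0}$, so the commutator vanishes identically and no appeal to the third family of~\eqref{fSpa1} is needed; your remark about ``using $J=2m+2+k+n$ to match the index count'' there is superfluous rather than wrong.

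Part (i) contains one step that fails as written. You bound $\|\Delta(\langle x\rangle^n D^\beta v)\|_{L^\infty}$ by $\|\langle x\rangle^n D^{\beta+2}v\|_{H^k}$ and assert that the Leibniz expansion ``fits into the second family of $L^2$-terms of the $\Spa$-norm.'' That is true only when $|\beta|+2\ge 2m+1$, because the weighted $L^2$ part of the norm starts at derivative order $2m+1$; for smaller $|\beta|$ the expansion produces terms such as $\|\langle x\rangle^{n}D^{\beta+2}v\|_{L^2}$, which the space does not control ($\langle x\rangle^n D^\gamma v\in L^\infty$ does not imply $\langle x\rangle^n D^\gamma v\in L^2$). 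The repair is a case split: for $|\beta|+2\le 2m$ the quantity $\|\langle x\rangle^n\Delta D^\beta v\|_{L^\infty}$ is already dominated by $\|v\|_{\Spa}$ through the $L^\infty$ family, and the Sobolev embedding $H^k\hookrightarrow L^\infty$ is needed only for $|\beta|\in\{2m-1,2m\}$, where every term of the Leibniz expansion has order between $2m+1$ and $2m+2+k$ and weight at most $\langle x\rangle^{n}$, hence is dominated by the $L^2$ part of the norm (this is precisely where the bound $\nu\le k+1$ in~\eqref{fSpa2} is saturated). A further cosmetic point: commuting the weight past $\Delta$ in part (i) is unnecessary and only manufactures extra commutator terms; integrating $\partial_t(\langle x\rangle^n D^\beta v)=i\langle x\rangle^n\Delta D^\beta v+\langle x\rangle^n D^\beta f$ in time and estimating the single term $\langle x\rangle^n\Delta D^\beta v$ is cleaner. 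With these adjustments your argument closes and yields both estimates with a single constant $\CST1$.
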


We now recall several estimates of the nonlinearity $|v|^\alpha v$. Given $\ell \in \N$, we set
\begin{equation} \label{fCA2} 
\|  u\| _{1, \ell} = \sup   _{ 0\le  |\beta | \le \ell  } \| \langle \cdot \rangle^{n}D^{\beta}u\| _{L^{\infty}}
\end{equation} 
\begin{equation} \label{fCA3}
\|  u\| _{2, \ell} =
\begin{cases} 
\displaystyle  \sup_{ 2{m}  +1\le  |\beta | \le \ell } \| \langle \cdot \rangle^{n}D^{\beta }u\| _{L^2  } &  \ell \ge 2m+1 \\
0 & \ell \le 2m 
\end{cases} 
\end{equation} 
and
\begin{equation} \label{fCA4} 
\|  u\| _{3, \ell } = 
\begin{cases} 
\displaystyle  \sup_{ 2{m}+3+{k} \le  |\beta | \le \ell }\| \langle \cdot \rangle^{{J}-\ell }D^{\beta }u\| _{L^ 2 } & \ell \ge 2m +3 + k \\ 0 & \ell \le 2m + 2 + k .
\end{cases} 
\end{equation} 
We have the following estimates of the nonlinearity.

\begin{prop}[\cite{CN2}, Proposition~3.1]\label{est-nonlin}
Assume~\eqref{def-knm}-\eqref{def-J} and let $ \Spa $ be defined by~\eqref{fSpa1}-\eqref{fSpa2}. 
Let $\alpha >0$ and suppose that, in addition to~\eqref{def-knm}, $n\ge \frac {N} {2\alpha }$. 
It follows that there exists a constant $ \CST2\geq1$ such that if $v\in  \Spa $ satisfies
\begin{equation} \label{fNDC1} 
\eta\inf\limits_{x\in\R^N}(\la x\ra^n|v(x)|)\geq1,
\end{equation}
for some $\eta >0$, then the following estimates hold.
\begin{enumerate}[{\rm (i)}] 

 \item 
 If $0\leq|\beta|\leq1$, then
\begin{equation}\label{nonlin-est-1}
\|\la \cdot \ra^nD^\beta ( |v|^\alpha v)  \|_{L^\infty}\leq  \CST2\|v\|_{L^\infty}^\alpha  \|\la \cdot \ra^nD^\beta v\|_{L^\infty} .
\end{equation}

\item 
If $2\leq|\beta|\leq2m$, then
\begin{equation} \label{nonlin-est-2}
\begin{split} 
\|  \langle \cdot \rangle ^n  D^{\beta} ( |v|^\alpha v)   \|  _{L^{\infty}}  \le &  \CST2 \| v \| _{ L^\infty  }^\alpha  \| \langle \cdot\rangle ^{n}D^{\beta} v \| _{L^\infty} \\ &    +  \CST2  \| v \| _{ L^\infty  }^\alpha  ( \eta \|  v\| _{1, | \beta |  -1})^{2|  \beta|  }\|  v\|  _{1,  |  \beta|  -1} .
\end{split} 
\end{equation} 

\item 
If $2m+1\leq|\beta|\leq2m+2+k$, then
\begin{equation} \label{nonlin-est-3}
\begin{split} 
\|\la \cdot \ra^nD^\beta(|v|^\alpha v)\|_{L^2} \leq & \CST2\|v\|_{L^\infty}^\alpha\|\la \cdot \ra^nD^\beta v\|_{L^2}\\ &
+\CST2(\eta\|v\|_{1,2m})^{2J+\alpha}\|v\|_{1,2m} \\ &+ \CST2\|v\|_{L^\infty}^\alpha(\eta\|v\|_{1,2m})^{2J}\|v\|_{2,|\beta|-1}.
\end{split} 
\end{equation} 
  
 \item 
 If $2m+3+k\leq|\beta|\leq J$, then
\begin{equation} \label{nonlin-est-4}
\begin{split} 
\|\la \cdot \ra^{J-|\beta|}D^\beta(|v|^\alpha v)\|_{L^2}\leq &  \CST2\|v\|_{L^\infty}^\alpha\|\la \cdot \ra^{J-|\beta|}D^\beta v\|_{L^2}
\\ & + \CST2(\eta\|v\|_{1,2m})^{2J+\alpha}\|v\|_{1,2m}\\
+ \CST2\|v\|_{L^\infty}^\alpha & (\eta\|v\|_{1,2m})^{2J}(\|v\|_{2,2m+2+k}+\|v\|_{3,|\beta|-1}).
\end{split} 
\end{equation} 

\end{enumerate} 
\end{prop}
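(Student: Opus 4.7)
The plan is to apply the Faà di Bruno formula to $|v|^\alpha v = G(v, \bar v)$, where $G(z, \bar z) = z (z \bar z)^{\alpha / 2}$ is smooth off the origin and positively homogeneous of degree $\alpha + 1$. Two elementary observations drive every estimate. First, homogeneity gives
\begin{equation*}
|\partial_z^j \partial_{\bar z}^k G(z, \bar z)| \le C_{j,k,\alpha}\,|z|^{\alpha + 1 - j - k}, \qquad z \ne 0.
\end{equation*}
Second, the non-vanishing hypothesis~\eqref{fNDC1} yields the pointwise conversion $|v(x)|^{1 - p} \le (\eta \langle x \rangle^n)^{p-1}$ for every integer $p \ge 1$, and in particular $\eta \|v\|_{1, 2m} \ge 1$, which I use throughout to pad exponents.

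Faà di Bruno expresses $D^\beta G(v, \bar v)$ as a finite linear combination of terms
\begin{equation*}
T = \partial_z^j \partial_{\bar z}^k G(v, \bar v)\,\prod_{i = 1}^p D^{\beta_i} w_i,
\end{equation*}
with $w_i \in \{v, \bar v\}$, $p = j + k \ge 1$, each $|\beta_i| \ge 1$, and $\sum |\beta_i| = |\beta|$. I split the sum into \emph{principal} terms ($p = 1$, so some $|\beta_i| = |\beta|$) and \emph{rest} terms ($p \ge 2$, every $|\beta_i| \le |\beta| - 1$). The principal terms collapse to $(1 + \tfrac{\alpha}{2})\,|v|^\alpha D^\beta v + \tfrac{\alpha}{2}\,|v|^{\alpha - 2} v^2 D^\beta \bar v$ and, after multiplication by the relevant outer weight, produce the first summand on the right of each of~\eqref{nonlin-est-1}--\eqref{nonlin-est-4}. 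When $|\beta| \le 1$ no rest term occurs, establishing~(i).

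For a rest term, the two observations combine into the pointwise bound
\begin{equation*}
|T| \le C\,|v|^\alpha\,\eta^{p - 1}\,\langle x \rangle^{-n}\,\prod_{i = 1}^p \bigl(\langle x \rangle^n |D^{\beta_i} w_i|\bigr).
\end{equation*}
In case~(ii) I place all $p$ factors $\langle x \rangle^n D^{\beta_i} v$ in $L^\infty$ via $\|v\|_{1, |\beta| - 1}$, multiply by $\langle x \rangle^n$, and pad $p - 1 \le |\beta| - 1$ up to $2|\beta|$, yielding~\eqref{nonlin-est-2}. In cases~(iii) and~(iv) the target is a weighted $L^2$ norm, so at least one factor must produce $L^2$ decay. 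If the largest $|\beta_i|$ is at least $2m + 1$, the corresponding factor is placed in $L^2$ and estimated by $\|v\|_{2, \cdot}$ or $\|v\|_{3, \cdot}$ according to whether $|\beta_i|$ lies in $[2m + 1, 2m + 2 + k]$ or $[2m + 3 + k, |\beta| - 1]$; the outer weight is matched with the norm weight via the pointwise inequalities $\langle x \rangle^{J - |\beta|} \le \langle x \rangle^n$ (valid in case~(iv) since $|\beta| \ge 2m + 3 + k$ and $J = 2m + 2 + k + n$) and $\langle x \rangle^{J - |\beta|} \le \langle x \rangle^{J - (|\beta| - 1)}$. When every $|\beta_i| \le 2m$, no derivative factor can be moved to $L^2$; instead the $L^2$ decay is extracted from $|v|^\alpha \le \|v\|_{1, 2m}^\alpha\,\langle x \rangle^{-n\alpha}$, which belongs to $L^2$ precisely under the hypothesis $n \ge \tfrac{N}{2\alpha}$, producing the second summand with exponent $2J + \alpha$ on the right of~\eqref{nonlin-est-3}--\eqref{nonlin-est-4}.

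The main obstacle is not any single pointwise estimate but the combinatorial bookkeeping: one must simultaneously track up to $|\beta| + 1$ factors, distribute them across the four norms $\|v\|_{L^\infty}$, $\|v\|_{1, \cdot}$, $\|v\|_{2, \cdot}$, $\|v\|_{3, \cdot}$, and verify that every term from Faà di Bruno lands in exactly one of the three product structures on the right of~\eqref{nonlin-est-2}--\eqref{nonlin-est-4}. Once the bookkeeping is set up, the padding $\eta \|v\|_{1, 2m} \ge 1$ collapses all the possible values of $p$ into a uniform exponent $2|\beta|$, $2J$, or $2J + \alpha$, and all constants are absorbed into a single $C_2 \ge 1$ depending only on $N, \alpha, m, n, k, J$.
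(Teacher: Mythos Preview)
Your outline is correct and matches the approach of~\cite{CN2}, which is exactly what the paper does: the proposition is quoted from~\cite[Proposition~3.1]{CN2} without a self-contained proof, and the subsequent remark only explains how the present formulation differs cosmetically from the original (replacing $1+\eta\|v\|_{1,\ell}$ by $\eta\|v\|_{1,\ell}$ and isolating the factor $\|v\|_{L^\infty}^\alpha$ in the last terms of~\eqref{nonlin-est-3}--\eqref{nonlin-est-4}).

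One point worth making explicit in your bookkeeping: the reason the ``one factor in $L^2$, all others in $L^\infty$'' scheme closes in cases~(iii) and~(iv) is that the hypothesis $2m\ge k+n+1$ (the third condition in~\eqref{def-knm}) forces $J=2m+2+k+n\le 4m+1$, so two indices with $|\beta_i|\ge 2m+1$ would require $|\beta|\ge 4m+2>J$. Hence at most one $|\beta_i|$ can fall outside the $L^\infty$ range, and the remaining factors are all controlled by $\|v\|_{1,2m}$. You allude to this implicitly, but it is the structural reason the decomposition into the three product forms on the right of~\eqref{nonlin-est-3}--\eqref{nonlin-est-4} is exhaustive.
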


\begin{rem} 
Estimates~\eqref{nonlin-est-2}--\eqref{nonlin-est-4} are not exactly the estimates of~\cite[Proposition~3.1]{CN2}. 
First, $1+ \eta \|  v\| _{1, \ell}$ is replaced by $\eta \|  v\| _{1, \ell}$ (with $\ell = |\beta |-1$ in~\eqref{nonlin-est-2} and $\ell = 2m$ in~\eqref{nonlin-est-3} and~\eqref{nonlin-est-4}). The two quantities are indeed equivalent, since by~\eqref{fNDC1}, $ \eta \|  v\| _{1, \ell} \ge 1$. 
Next, the term $(\eta\|v\|_{1,2m})^{2J+\alpha}\|v\|_{2,  |\beta | -1} $ in~\cite[formula~(3.9)]{CN2} is replaced in formula~\eqref{nonlin-est-3} here by $  \| v \| _{ L^\infty  }^\alpha  (\eta\|v\|_{1,2m})^{2J }\|v\|_{2,  |\beta | -1} $. 
This is in fact what the proof in~\cite{CN2} shows, see in particular~\cite[formula~(3.20)]{CN2}.
Finally, the term $ (\eta\|v\|_{1,2m})^{2J+ \alpha }(\|v\|_{2,2m+2+k}+\|v\|_{3,|\beta|-1})$ in~\cite[formula~(3.10)]{CN2} is replaced in formula~\eqref{nonlin-est-4} here by $\|v\|_{L^\infty}^\alpha (\eta\|v\|_{1,2m})^{2J}(\|v\|_{2,2m+2+k}+\|v\|_{3,|\beta|-1})$. Again, this is what the proof in~\cite{CN2} shows, see in particular~\cite[formulas~(3.24) and~(3.25)]{CN2}.
The term $  \| v \| _{ L^\infty  }^\alpha $ in these estimates is important in our proof of Proposition~\ref{BD-GWP} below.

\end{rem} 

Finally, we recall the local well-posedness of ~\eqref{NLS-1} in the space $\Spa$, see~\cite[Theorem~1]{CN1} and~\cite[Proposition~4.1]{CN2}.

\begin{prop}\label{LWP}
Assume~\eqref{def-knm}-\eqref{def-J} and let $ \Spa $ be defined by~\eqref{fSpa1}-\eqref{fSpa2}. 
Let $\alpha >0$ and suppose that, in addition to~\eqref{def-knm}, $n\ge \frac {N} {2\alpha }$. 
Let $\lambda\in\C$ and $b\geq0$. If $v_0\in \Spa $ satisfies
\begin{equation}\label{inf-0}
\inf\limits_{x\in\R^N}\la x\ra^n|v_0(x)|>0,
\end{equation}
then there exist $0<T<\frac1b$ and a unique solution $v\in C([0,T], \Spa )$ of ~\eqref{NLS-1} satisfying
\begin{equation}\label{inf}
\inf\limits_{0\leq t\leq T}\inf\limits_{x\in\R^N}(\la x\ra^n|v(t,x)|)>0.
\end{equation}
Moreover, $v$ can be extended on a maximal existence interval $[0, \Tma )$ with $0< \Tma \leq\frac1b$ to a solution $v\in C([0, \Tma ), \Spa )$ satisfying~\eqref{inf} for all $0<T< \Tma $; and if $ \Tma <\frac1b$, then
\begin{equation}\label{blowup}
\|v(t)\|_{ \Spa }+\Big( \inf\limits_{x\in\R^N}\la x\ra^n|v(t,x)|\Big)^{-1} \goto_{ t\uparrow  \Tma }\infty.
\end{equation}
\end{prop}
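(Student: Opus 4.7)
\emph{Setup.} The plan is a standard Picard iteration for the Duhamel formula~\eqref{Duham-form}, carried out on a closed set of $C([0,T],\Spa)$ that controls simultaneously the $\Spa$-norm and the lower bound on $\la x\ra^n|v|$. I fix $T_0\in(0,1/b)$, observe that $M_0\Eqdef\sup_{[0,T_0]}(1-bs)^{-(4-N\alpha)/2}$ is finite, and set $\rho=\|v_0\|_\Spa$ and $\delta=\inf_{x}\la x\ra^n|v_0(x)|>0$. For $T\in(0,T_0]$ to be chosen, I define
\begin{equation*}
E_T=\Bigl\{v\in C([0,T],\Spa)\,:\ \sup_{[0,T]}\|v(t)\|_\Spa\le 2\rho,\ \inf_{[0,T]\times\R^N}\la x\ra^n|v(t,x)|\ge \delta/2\Bigr\},
\end{equation*}
and let $\Phi v$ denote the right-hand side of~\eqref{Duham-form} (with the nonautonomous factor included). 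On $E_T$, condition~\eqref{fNDC1} of Proposition~\ref{est-nonlin} holds with $\eta=2/\delta$.

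\emph{Self-mapping and contraction.} I apply Proposition~\ref{est-solu} with $f(s)=\lambda(1-bs)^{-(4-N\alpha)/2}|v|^\alpha v(s)$ and sum the estimates~\eqref{est-solu-small-der}--\eqref{est-solu-lar-der} over all components of the norm~\eqref{fSpa2}, bounding $f$ via Proposition~\ref{est-nonlin} (together with $\|v\|_{L^\infty}\le C\|v\|_\Spa$, which follows from $\Spa\hookrightarrow H^J\hookrightarrow L^\infty$). Because $(1-bs)^{-(4-N\alpha)/2}\le M_0$ on $[0,T]$, this yields $\|\Phi v(t)\|_\Spa\le\rho+C(\rho,\delta^{-1},M_0)\,t$ for all $v\in E_T$. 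For the lower bound, the $|\beta|=0$ case of~\eqref{est-solu-small-der} combined with~\eqref{nonlin-est-1} applied to $\Phi v-v_0$ gives $\|\la\cdot\ra^n(\Phi v(t)-v_0)\|_{L^\infty}\le C'(\rho,\delta^{-1},M_0)\,t$, so $\inf_x\la x\ra^n|\Phi v(t,x)|\ge\delta-C' t$. Choosing $T$ so small that these two linear-in-$t$ corrections are bounded by $\min(\rho,\delta/2)$ gives $\Phi(E_T)\subset E_T$. For contraction I work in the weaker norm of $C([0,T],L^2\cap L^\infty)$: the pointwise bound $\bigl||v_1|^\alpha v_1-|v_2|^\alpha v_2\bigr|\le(\alpha+1)(|v_1|^\alpha+|v_2|^\alpha)|v_1-v_2|$ together with the uniform $L^\infty$-bound on $E_T$ yields a Lipschitz constant of order $T$. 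Since $E_T$ remains closed in this weaker topology (the $\Spa$-bound passes to weak limits and the pointwise lower bound to strong limits), Banach's theorem produces a unique fixed point $v\in E_T$. Uniqueness in the class of $\Spa$-solutions satisfying~\eqref{inf} follows from the same difference estimate.

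\emph{Maximal extension and blow-up.} A standard continuation argument extends $v$ to a maximal interval $[0,\Tma)\subset[0,1/b)$. If $\Tma<1/b$ but both $\|v(t)\|_\Spa$ and $(\inf_x\la x\ra^n|v(t,x)|)^{-1}$ remained bounded on $[0,\Tma)$, then the fixed-point construction above, restarted from $v(t_0)$ with $t_0$ close enough to $\Tma$ (using $M_0'=\sup_{[0,\Tma+\varepsilon]}(1-bs)^{-(4-N\alpha)/2}$), would extend $v$ past $\Tma$, contradicting maximality. This establishes~\eqref{blowup}. The main obstacle throughout is the tight coupling between the two controls: the nonlinearity estimates~\eqref{nonlin-est-1}--\eqref{nonlin-est-4} require a strictly positive lower bound on $\la x\ra^n|v|$ (that is the role of $\eta$), while preserving that lower bound through the iteration requires in turn a uniform bound on $\|v\|_\Spa$. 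Both can be closed simultaneously on a common short interval precisely because Propositions~\ref{est-solu}--\ref{est-nonlin} depend only polynomially on $\|v\|_\Spa$ and on $\eta$, so the nonlinear corrections disappear linearly with $T$.
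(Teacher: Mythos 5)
The paper itself does not prove this proposition: it is recalled verbatim from \cite{CN1} (Theorem~1) and \cite{CN2} (Proposition~4.1), so you are reconstructing the argument from scratch. Your architecture --- a fixed point for the Duhamel map on a set that simultaneously caps $\|v\|_\Spa$ and bounds $\la x\ra^n|v|$ from below, with contraction measured in a weaker topology and a standard continuation/blow-up alternative at the end --- is indeed the strategy of those references, and the two a priori ingredients you invoke (Propositions~\ref{est-solu} and~\ref{est-nonlin}) are exactly the ones the construction rests on. Two points, however, need repair.

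First, the contraction cannot be run in $C([0,T],L^2\cap L^\infty)$ as you claim: the Duhamel term is $\int_0^t e^{i(t-s)\Delta}(\cdots)\,ds$, and $e^{i\tau\Delta}$ is \emph{not} bounded on $L^\infty(\R^N)$, so the pointwise bound on $|v_1|^\alpha v_1-|v_2|^\alpha v_2$ gives you no control of the $L^\infty$ component of $\Phi v_1-\Phi v_2$. You must drop to $C([0,T],L^2)$, where the argument does close (unitarity of $e^{i\tau\Delta}$ on $L^2$ plus the uniform $L^\infty$ bound on $E_T$); this also suffices for uniqueness. You then need $E_T$ to be complete for the $L^2$ distance, which requires lower semicontinuity of the seminorms in~\eqref{fSpa2} under $L^2$ convergence (fine) and a separate a posteriori argument, via the equation, that the limit is actually \emph{continuous} into $\Spa$ rather than merely bounded --- the usual Kato-method step you elide with ``passes to weak limits.'' Second, the right-hand sides of~\eqref{est-solu-small-der}--\eqref{est-solu-lar-der} contain $\|v(s)\|_\Spa$ where $v$ is the solution of the \emph{linear} problem, i.e.\ $\Phi v$ itself, not the input of the iteration; so summing the estimates gives $\|\Phi v(t)\|_\Spa\le\rho+C_1\int_0^t\|\Phi v(s)\|_\Spa\,ds+C(\rho,\delta^{-1},M_0)\,t$, and you need a Gronwall step before you can conclude $\|\Phi v(t)\|_\Spa\le\rho+Ct\le 2\rho$. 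Neither issue is fatal, but as written the contraction step would fail and the self-mapping bound is not yet established. The continuation argument and the derivation of~\eqref{blowup} are fine, modulo phrasing the negation of~\eqref{blowup} along a sequence $t_j\uparrow\Tma$ rather than as boundedness on all of $[0,\Tma)$.
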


\section{Estimates for~\eqref{NLS-1}} \label{sEstimates} 
Throughout this section, we assume~\eqref{def-knm}-\eqref{def-J} and we let $ \Spa $ be defined by~\eqref{fSpa1}-\eqref{fSpa2}. 
We derive estimates for certain solutions of~\eqref{NLS-1}. 
We first introduce several indices and seminorms.
Let 
\begin{align} 
\sigma _0 &= 0,  \label{fDfnsigma0} \\
\sigma _1 &= \frac {1 } { 4 [4J (J-2m-1) + 4J+ (4/N) +1] (8m+1)^{2m}  },  \label{fDfnsigma1}
\end{align} 
and set
\begin{equation}\label{de-sigma}
\sigma_j=\begin{cases}
          (8m+1)^j\sigma_1,&2\leq j\leq2m\\
          \frac{2-N\alpha}{2}+(4J+2\alpha+1)\sigma_{2m},&\ j=2m+1\\
          4J\sigma_{2m}(j-2m-1)+\sigma_{2m+1},&2m+2\leq j\leq J.
         \end{cases}
\end{equation}
It follows that
\begin{equation}
0=\sigma_0<\sigma_1\leq\sigma_j<\sigma_k\leq\sigma_J , \quad 1\leq j<k\leq J.
\end{equation}
Moreover, it follows from~\eqref{de-sigma} that
\begin{equation} \label{festsigj1} 
\sigma _J = [4J (J-2m-1) + 4J+2\alpha +1] (8m+1)^{2m} \sigma _1 + \frac {2-N\alpha } {2}. 
\end{equation} 
We deduce from~\eqref{festsigj1} and~\eqref{fDfnsigma1} that
\begin{equation} \label{festsigj2} 
\sigma _J \le  \frac {1} {2} \quad  \text{for } \alpha \in [ \frac {3 } {2 N}, \frac {2} {N}].
\end{equation} 
Given $0<T<\frac1b$ and $v\in C([0,T], \Spa )$ satisfying~\eqref{inf}, we define
\begin{align}
\Phi_{1,T}&=\sup\limits_{0\leq t\leq T}\sup\limits_{0\leq j\leq 2m}(1-bt)^{\sigma_j}\|v\|_{1,j}\label{def-P1}\\
\Phi_{2,T}&=\sup\limits_{0\leq t\leq T}\sup\limits_{2m+1\leq j\leq 2m+2+k}(1-bt)^{\sigma_j}\|v\|_{2,j}\label{def-P2}\\
\Phi_{3,T}&=\sup\limits_{0\leq t\leq T}\sup\limits_{2m+3+k\leq j\leq J}(1-bt)^{\sigma_j}\|v\|_{3,j}\label{def-P3}\\
\Phi_{4,T}&=\sup\limits_{0\leq t\leq T}\frac{(1-bt)^{\sigma_1}}{\inf\limits_{x\in\R^N}\la x\ra^n|v(t,x)|}\label{def-P4}
\end{align}
where the norms $\|\cdot\|_{l,j}$ are given by ~\eqref{fCA2}--\eqref{fCA4}, and we denote
\begin{align}
\Phi_T&=\max\{\Phi_{1,T},\Phi_{2,T},\Phi_{3,T}\}\\
\Psi_T&=\max\{\Phi_T,\Phi_{4,T}\}.
\end{align}
From these definitions, it is easy to verify that
\begin{align}
\|v\|_{L^\infty((0,t), \Spa )}&\leq  \CST3(1-bt)^{-\sigma_J}\Psi_T,\label{uppbd-1}\\
\|v(t)\|_{ \ell ,j}&\leq(1-bt)^{-\sigma_j}\Psi_T,\quad \ell= 1,2,3\label{uppbd-2}\\
\frac{1}{\la x\ra^n|v(t,x)|}&\leq(1-bt)^{-\sigma_1}\Psi_T,\label{uppbd-3}
\end{align}
where the constant $ \CST3\geq1$ is independent of $t\in [0,T]$.

\begin{lem}\label{lem-Decay-v}
Suppose $\Re \lambda<0$ and $ \frac {3 } {2 N} \le \alpha \le \frac {2} {N}$. 
Let $K\ge 1$ and set 
\begin{equation} \label{bd-f-1} 
b_0 = \frac {16} {N} (4K)^{ \frac {4} {N} +2} .
\end{equation}
Let $b>0$, let $v_0\in \Spa $ satisfy~\eqref{inf-0}, and let $v\in C([0, \Tma), \Spa )$ be the solution of ~\eqref{NLS-1} given by Proposition~$\ref{LWP}$. If $v$ satisfies
\begin{equation}\label{upbd-v}
\sup _{0\leq t\leq T}\Psi_{t}\leq 4K,
\end{equation}
for some $0< T < \Tma$ and if $b\ge b_0$, then
\begin{equation}\label{Decay-v}
\|v(t )\|_{L^\infty}^\alpha\leq \frac{b(2-N\alpha)}{2\alpha| \Re \lambda|}\frac{(1-bt)^\frac{2-N\alpha}{2}}{1-(1-bt)^\frac{2-N\alpha}{2}},
\end{equation}
for all  $0< t\le T$.
\end{lem}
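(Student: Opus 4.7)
The plan is to identify the right-hand side of \eqref{Decay-v}, namely
\[
\Phi(t)\Eqdef\frac{b(2-N\alpha)}{2\alpha|\Re\lambda|}\cdot\frac{(1-bt)^{(2-N\alpha)/2}}{1-(1-bt)^{(2-N\alpha)/2}},
\]
as the maximal solution of the ``reduced'' Bernoulli ODE $\Phi'=-\alpha|\Re\lambda|(1-bt)^{-(4-N\alpha)/2}\Phi^2$ with $\Phi(0^+)=+\infty$ (verified by direct differentiation), and to prove \eqref{Decay-v} by a pointwise comparison. To this end, introduce the defect
\[
Z(t,x)=|v(t,x)|^{-\alpha}-\Phi(t)^{-1},
\]
on $(0,T]\times\R^N$, extended continuously to $t=0$ by $Z(0,x)=|v_0(x)|^{-\alpha}$ (since $\Phi(t)^{-1}\to 0$ as $t\to 0^+$). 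The conclusion \eqref{Decay-v} is exactly the statement that $Z\ge 0$ on $[0,T]\times\R^N$.

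A pointwise calculation using \eqref{NLS-1} gives $\Re(\bar v\,\partial_tv)=-\Im(\bar v\Delta v)-|\Re\lambda|(1-bt)^{-(4-N\alpha)/2}|v|^{\alpha+2}$, which combined with $\partial_t(|v|^{-\alpha})=-\alpha|v|^{-\alpha-2}\Re(\bar v\,\partial_tv)$ yields
\[
\partial_t(|v|^{-\alpha})=\alpha\,\frac{\Im(\bar v\Delta v)}{|v|^{\alpha+2}}+\alpha|\Re\lambda|(1-bt)^{-(4-N\alpha)/2}.
\]
The second term coincides with $\partial_t\Phi^{-1}$ by the ODE for $\Phi$, so the dissipative contributions cancel and
\[
\partial_tZ(t,x)=\alpha\,\frac{\Im(\bar v\Delta v)(t,x)}{|v(t,x)|^{\alpha+2}}.
\]
Integrating in $t$, the problem reduces to showing that, for every $x\in\R^N$ and $t\in(0,T]$,
\[
\alpha\int_0^t\frac{|\Delta v(s,x)|}{|v(s,x)|^{\alpha+1}}\,ds\le\frac{1}{|v_0(x)|^\alpha}.
\]

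To verify this, I invoke \eqref{upbd-v}. Applying \eqref{uppbd-2} with $\ell=1$, $j=2$ (together with $|\Delta v|\le N\max_i|\partial_i^2v|$) gives $|\Delta v(s,x)|\le 4NK\la x\ra^{-n}(1-bs)^{-\sigma_2}$, while \eqref{uppbd-3} gives $|v(s,x)|^{-1}\le 4K\la x\ra^n(1-bs)^{-\sigma_1}$. The integrand is therefore bounded by $N(4K)^{\alpha+2}\la x\ra^{n\alpha}(1-bs)^{-\sigma_2-\sigma_1(\alpha+1)}$. Since $\sigma_1$ is chosen so small in \eqref{fDfnsigma1} that $\sigma_2+\sigma_1(\alpha+1)<1$, the time integral is bounded by a universal multiple of $1/b$. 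On the other hand, $\|v_0\|_{1,0}\le\Psi_T\le 4K$ yields $|v_0(x)|^{-\alpha}\ge(4K)^{-\alpha}\la x\ra^{n\alpha}$, so the spatial weight $\la x\ra^{n\alpha}$ cancels on both sides, and the required inequality reduces to an estimate of the form $\alpha N(4K)^{2\alpha+2}\lesssim b$. Since $\alpha\le 2/N$ implies $\alpha N\le 2$ and $(4K)^{2\alpha+2}\le(4K)^{4/N+2}$, the hypothesis $b\ge b_0=(16/N)(4K)^{4/N+2}$ absorbs this.

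The main delicate point is the uniform-in-$x$ character of the pointwise comparison: the Laplacian error $\partial_tZ$ carries a factor $\la x\ra^{n\alpha}$ which grows at infinity, and it is essential that the matching decay of $|v_0(x)|$ supplies a lower bound on $|v_0(x)|^{-\alpha}$ that cancels this growth. This is precisely where the non-vanishing hypothesis \eqref{inf-0}, encoded quantitatively in the $\Phi_{4,T}$ component of $\Psi_T$, enters in an essential way; without such a quantitative lower bound on $|v|$ the argument could not close.
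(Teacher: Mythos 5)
Your proof is correct and is essentially the paper's own argument in different packaging: your defect $Z=|v|^{-\alpha}-\Phi^{-1}$ is exactly $|v_0|^{-\alpha}(1+f)$, where $f$ is the error functional the paper isolates in~\eqref{fDfnf1} after integrating the same pointwise identity~\eqref{eq-v-2}, and your final sufficient condition $\alpha\int_0^t|\Delta v|\,|v|^{-\alpha-1}\,ds\le|v_0|^{-\alpha}$ is precisely the paper's $1+f\ge0$, established by the same use of $\Psi_T\le 4K$ (for $\la x\ra^n|\Delta v|$, $(\la x\ra^n|v|)^{-1}$ and $\la x\ra^n|v_0|$) together with $\sigma_2+(\alpha+1)\sigma_1\le\sigma_3\le\frac12$ and the absorption by $b\ge b_0$. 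The only looseness is in the last numerical step — retaining the factor $N$ from $|\Delta v|\le N\sup_i|\partial_i^2v|$ your chain asks for $b\ge 2\alpha N(4K)^{2\alpha+2}$, which $b_0=\frac{16}{N}(4K)^{4/N+2}$ covers only for $N\le4$ — but the paper's own estimate~\eqref{bd-f} silently drops that same factor, and enlarging $b_0$ by a universal constant changes nothing downstream.
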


\begin{proof}
Multiplying~\eqref{NLS-1} by $  \overline{v} $, taking the real part and using that $|v|>0$ on $[0, \Tma ) \times\R^N$ by Proposition~\ref{LWP}, we obtain
\begin{equation}\label{eq-v-1}
\partial _t |v| =L+ \Re \lambda(1-bt)^{-\frac{4-N\alpha}{2}}|v|^{\alpha+1}
\end{equation}
where 
\begin{equation} \label{fDfnL} 
L(t,x)=-\frac{\Im(  \overline{v} \Delta v)}{|v|}; 
\end{equation} 
and so
\begin{equation}\label{eq-v-2}
-\frac1\alpha  \partial _t ( |v|^{-\alpha} )=|v|^{-\alpha-1}L+ \Re \lambda(1-bt)^{-\frac{4-N\alpha}{2}}.
\end{equation}
Integrating~\eqref{eq-v-2} in $t$, we obtain
\begin{equation*}
\begin{split} 
\frac{1}{|v(t,x)|^\alpha}= & \frac{1}{|v_0(x)|^\alpha}+\frac{ 2 \alpha| \Re \lambda |}{( 2-N\alpha ) b} [ (1-bt)^{-\frac{2-N\alpha}{2}}-1] \\ & -\alpha\int^t_0|v(s,x)|^{-\alpha-1}L(s,x)\,ds
\end{split} 
\end{equation*}
so that
\begin{equation}\label{eq-v-4}
|v(t,x)|^\alpha=\frac{|v_0 (x) |^\alpha} {1+f(t,x)+ \frac{ 2 \alpha| \Re \lambda |}{ ( 2-N\alpha )b} |v_0 (x) |^\alpha[(1-bt)^{-\frac{2-N\alpha}{2}}-1]}
\end{equation}
where 
\begin{equation} \label{fDfnf1} 
f(t,x)=-\alpha\int^t_0|v_0(x)|^\alpha|v(s,x)|^{-\alpha-1}L(s,x)\,ds.
\end{equation} 
It follows from the definitions of $\Psi_T$ and $L(t,x)$ that, for any $0\leq s\leq t\leq T$
\begin{equation} \label{fEstf1} 
\begin{split} 
|v_0(x)|^\alpha|v(s,x)|^{-\alpha-1}|L(s,x)| & \leq(\la x\ra^n|v_0|)^\alpha(\la x\ra^n|v|)^{-\alpha-1}(\la x\ra^n|\Delta v|)\\
& \leq (4K)^{2\alpha+2}(1-bs)^{-(\alpha+1)\sigma_1-\sigma_2} \\ & \leq (4K)^{2\alpha+2}(1-bs)^{- \sigma _3 }
\end{split} 
\end{equation}
where in the last inequality we used  
\begin{equation*} 
(\alpha+1)\sigma_1+\sigma_2 \le (\alpha +2) \sigma _2 \le 4 \sigma _2 \le \sigma_3 
\end{equation*} 
 by~\eqref{de-sigma}. 
Using $\sigma _3\le \sigma _J\le \frac {1} {2}$ (see~\eqref{festsigj2}), we obtain
\begin{equation}\label{bd-f}
|f(t,x)|\leq\alpha\int^t_0 (4K)^{2\alpha+2}(1-bs)^{-\frac {1} {2} }\,ds\leq\frac{ 2 \alpha (4K)^{2\alpha+2}}{b }.
\end{equation}
We deduce from~\eqref{bd-f} and~\eqref{bd-f-1}  that 
\begin{gather}
\frac {3} {2} \ge 1+f(t,x)\geq\frac12,
\end{gather}
for $b\ge b_0$, $0\le t\le T$ and $x\in \R^N $. 
In particular, $1+ f \ge 0$ and estimate~\eqref{Decay-v} follows.
\end{proof}  

\begin{prop}\label{BD-GWP}
Suppose $\Re \lambda<0$.
Given $K\ge 1$, let $\alpha _1 \in ( \max\{ \frac {3} {2N}, \frac {2} {N+1} \}, \frac {2} {N} ) $ be given by
\begin{equation} \label{fDfnalphaun} 
\frac {12 \CST1 \CST2  (4K)^{4J+1} | \lambda|  } {\sigma _1 | \Re \lambda|}   \Bigl(  \frac{ 2 }{   \alpha _1} -N \Bigr) = 1, 
\end{equation} 
and let
\begin{equation} \label{fDfnbun} 
b_1 = \max \Bigl\{  \frac {16} {N} (4K)^{ \frac {4} {N} +2},  8 \CST3 ,  \frac { 32  (4K)^{4J+ 4 } |\lambda | \CST1 \CST2   } {  \sigma  _1 }, \frac { 2^{ \frac {4} {N} +3} \alpha (4K)^{2}} { 3^{\frac {1} {N}} -1} \Bigr\},
\end{equation} 
where $\sigma _1$  is given by~\eqref{fDfnsigma1},  $ \CST1$ by Proposition~$\ref{est-solu}$,  
$ \CST2$ by Proposition~$\ref{est-nonlin}$, and $\CST3$ by~\eqref{uppbd-1}.
If $v_0\in \Spa $ satisfies~\eqref{IV}, 
then for every $\alpha\in[\alpha_1,\frac2N)$ and $b\geq b_1$, the corresponding solution $v\in C([0, \Tma ), \Spa )$ of~\eqref{NLS-1} given by Proposition~$\ref{LWP}$ satisfies $ \Tma =\frac1b$ and
\begin{equation}\label{UB}
\sup\limits_{0\leq T<\frac1b}\Psi_T\leq 4K.
\end{equation}
\end{prop}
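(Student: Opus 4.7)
The plan is a standard continuity-bootstrap argument. Define
\[
T^\star = \sup\Bigl\{T \in [0, \Tma) : \sup_{0\le t\le T}\Psi_t \le 4K\Bigr\}.
\]
By hypothesis~\eqref{IV}, $\Psi_0 \le K$, and by Proposition~\ref{LWP} the map $t\mapsto\Psi_t$ is continuous, so $T^\star > 0$. I will establish the strict improvement $\sup_{0\le t\le T}\Psi_t \le 2K$ for every $T\in(0, T^\star)$. By continuity this rules out $\Psi_{T^\star} = 4K$, forcing $T^\star = \Tma$; the blowup criterion~\eqref{blowup} then forces $\Tma = 1/b$, which proves~\eqref{UB}.

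Fix $T \in (0, T^\star)$. For the three quantities $\Phi_{1,T}, \Phi_{2,T}, \Phi_{3,T}$, apply Proposition~\ref{est-solu} to Duhamel's formula~\eqref{Duham-form} with source $f(s) = \lambda(1-bs)^{-(4-N\alpha)/2}|v|^\alpha v$, and bound $f$ in the relevant norms via Proposition~\ref{est-nonlin} with $\eta = \Phi_{4,T}(1-bs)^{-\sigma_1}$; condition~\eqref{fNDC1} then follows from~\eqref{uppbd-3}. Every nonlinear estimate~\eqref{nonlin-est-1}--\eqref{nonlin-est-4} carries a common factor $\|v(s)\|_{L^\infty}^\alpha$, which I replace using Lemma~\ref{lem-Decay-v}. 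The resulting extra decay $(1-bs)^{(2-N\alpha)/2}$ combined with the singular weight $(1-bs)^{-(4-N\alpha)/2}$ yields a net integrand of order $(1-bs)^{-1}$ times a constant proportional to $(2-N\alpha)b/|\Re\lambda|$; at times near $s=0$ (where Lemma~\ref{lem-Decay-v} is weak), one instead uses the crude bound $\|v(s)\|_{L^\infty}^\alpha \le (4K)^\alpha$, whose contribution is damped by $1/b$. Combining with~\eqref{fEIn1} and the telescoping structure of~\eqref{de-sigma}, one obtains estimates of the form
\[
(1-bt)^{\sigma_j}\|v(t)\|_{\ell,j} \le K + \frac{\CST1\CST2|\lambda|(4K)^{4J+1}}{\sigma_1|\Re\lambda|}\Bigl(\frac{2}{\alpha}-N\Bigr)\Psi_T
\]
for every admissible $(\ell,j)$. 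By the defining equation~\eqref{fDfnalphaun}, the coefficient of $\Psi_T$ is at most $\frac{1}{12}$, so $\Phi_{\ell,T} \le K + \frac{\Psi_T}{12} \le \frac{4K}{3} < 2K$.

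For $\Phi_{4,T}$, I work directly with the explicit identity~\eqref{eq-v-4} derived in Lemma~\ref{lem-Decay-v}. The bound $|f(t,x)|\le\frac{2\alpha(4K)^{2\alpha+2}}{b}$ from~\eqref{bd-f}, combined with~\eqref{IV}, yields
\[
\sup_x(\la x\ra^n|v(t,x)|)^{-\alpha} \le \Bigl(1+\frac{2\alpha(4K)^{2\alpha+2}}{b}\Bigr)K^\alpha + \frac{2\alpha|\Re\lambda|}{(2-N\alpha)b}\bigl[(1-bt)^{-(2-N\alpha)/2}-1\bigr].
\]
The key inequality $\alpha\sigma_1 \ge (2-N\alpha)/2$---equivalent to $2/\alpha-N \le 2\sigma_1$ and a consequence of $\alpha\ge\alpha_1$ together with~\eqref{fDfnalphaun}---lets the weight $(1-bt)^{\alpha\sigma_1}$ absorb the singular factor $(1-bt)^{-(2-N\alpha)/2}$. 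The lower bounds on $b$ in~\eqref{fDfnbun} involving $(4K)^{4/N+2}$ and $(3^{1/N}-1)^{-1}$ are tuned precisely so that the first term satisfies $(1+|f|)K^\alpha \le 3^{1/N}K^\alpha/2 + K^\alpha/2$ and the second term is at most $(2^\alpha - 3^{1/N}/2) K^\alpha$, yielding $\Phi_{4,T}\le 2K$ after taking $(1/\alpha)$-th roots. Combined with the bound for $\Phi_{1,T},\Phi_{2,T},\Phi_{3,T}$, this gives $\Psi_T \le 2K$ as required.

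The main obstacle is the careful bookkeeping of exponents in~\eqref{de-sigma} across the four norm families. In particular, the transition $\sigma_{2m+1}-\sigma_{2m}\ge\frac{2-N\alpha}{2}$ is engineered so that the extra decay from Lemma~\ref{lem-Decay-v} exactly compensates the weight change when passing from the bounded-derivative norms $\|\cdot\|_{1,j}$ to the $L^2$-type norms $\|\cdot\|_{2,j}$, while~\eqref{festsigj2} guarantees $\sigma_J\le 1/2$ so that all time-integrals via~\eqref{fEIn1} remain controlled. Tracking the combinatorial factors $(8m+1)^j$ of the nonlinear estimates~\eqref{nonlin-est-2}--\eqref{nonlin-est-4} against the recursive $\sigma_j$ is what ultimately forces the coefficient of $\Psi_T$ in the estimate above to collapse to $(2/\alpha-N)/\sigma_1$, matching~\eqref{fDfnalphaun}.
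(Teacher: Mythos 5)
Your proposal follows essentially the same route as the paper's proof: the same bootstrap on $\Tstar$ closed by a strict improvement of the bound, the same combination of Propositions~\ref{est-solu} and~\ref{est-nonlin} with the time integral split at the point where $(1-bt')^{\frac{2-N\alpha}{2}}=\frac12$ (crude bound with $1/b$-damping before, Lemma~\ref{lem-Decay-v} giving the $(2-N\alpha)/(\sigma_1|\Re\lambda|)$ smallness after, matching~\eqref{fDfnalphaun}), and the same separate treatment of $\Phi_{4}$ via the pointwise identity for $|v|^{-\alpha}$ and the inequality $\alpha\sigma_1\ge\frac{2-N\alpha}{2}$. The one imprecision is the claim that \emph{every} term in~\eqref{nonlin-est-1}--\eqref{nonlin-est-4} carries a factor $\|v\|_{L^\infty}^\alpha$: the term $\CST2(\eta\|v\|_{1,2m})^{2J+\alpha}\|v\|_{1,2m}$ does not, and is instead absorbed by the $\frac{2-N\alpha}{2}$ shift built into $\sigma_{2m+1}$ together with the $1/b$ damping --- which is in fact the mechanism your last paragraph describes.
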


\begin{proof}
We set
\begin{equation} \label{fDfnTstar} 
\Tstar =\sup\{0\leq T< \Tma ;\Psi_T\leq4K\}.
\end{equation} 
Since $\Psi_0\leq K$ and $v\in C([0, \Tma ), \Spa )$, we see that $0< \Tstar \leq  \Tma $.
We claim that if $\alpha \in [\alpha _1, \frac {2} {N} )$ and $b\ge b_1$, then
\begin{equation}\label{T*=max}
\Tstar = \Tma .
\end{equation}
We note that, since $\alpha \ge \alpha _1\ge  \frac {2} {N+1}$, the second condition in~\eqref{def-knm} implies that 
$\Imqd > \max \{ \frac {N} {2} +1,  \frac {N} {2\alpha } \}$, so that we may apply Propositions~\ref{est-nonlin} and~\ref{LWP}.
Assuming~\eqref{T*=max},  it follows from~\eqref{uppbd-1}, \eqref{uppbd-3} and~\eqref{fDfnTstar}  that for any $t\in[0, \Tma )$
\begin{equation}\label{upbd-v3}
\|v(t)\|_{ \Spa }+\Big( \inf\limits_{x\in\R^N}\la x\ra^n|v(t,x)|\Big)^{-1}\leq CK(1-bt)^{-\sigma_J}.
\end{equation}
If $ \Tma <\frac1b$, then we deduce from~\eqref{upbd-v3} that
\begin{equation*}
\sup  _{ 0\le t<\Tma  }  \Bigl( \|v(t)\|_{ \Spa }+\Big( \inf\limits_{x\in\R^N}\la x\ra^n|v(t,x)|\Big)^{-1} \Bigr) <\infty ,
\end{equation*}
which contradicts the blowup alternative~\eqref{blowup}. Therefore, we have $ \Tstar  = \Tma =\frac1b$ and~\eqref{UB}  follows.

Now we prove the claim~\eqref{T*=max}. We assume by contradiction that
\begin{equation}
\Tstar < \Tma ,
\end{equation}
then by the definition of $ \Tstar $, we have
\begin{equation}\label{bd-T*}
\Psi_{ \Tstar }=4K.
\end{equation}
It follows from~\eqref{uppbd-1}, \eqref{bd-T*} and~\eqref{festsigj2}  that
\begin{equation}\label{bd-inter-v}
\int^{ \Tstar  }_0 \|v(s)\|_{ \Spa }\,ds\leq 4K \CST3\int^{ \Tstar } _0(1-bs)^{-\sigma_J}\,ds\leq \frac{8K \CST3}{b }.
\end{equation}
Using also~\eqref{IV} and~\eqref{fDfnbun}, we see that
\begin{equation} \label{fEstdpt1} 
 \| v_0 \|_\Spa + \int _0^{\Tstar } \|v(s)\|_{ \Spa }\,ds\leq  2K.
\end{equation} 
Next, we set
\begin{equation}
\eta(t)=4K(1-bt)^{-\sigma_1}\label{eta},
\end{equation}
so that (by definition of $\Phi_{4, \Tstar}$)
\begin{equation} \label{lbd-v}
\inf  _{ 0\le t\le \Tstar }  \Bigl\{  \eta(t)\inf\limits_{x\in\R^N}\la x\ra^n|v(t,x)|  \Bigr\} \geq1 .
\end{equation}
If $2\leq|\beta|\leq2m$, we deduce from~\eqref{eta}, \eqref{uppbd-2} and~\eqref{bd-T*}  that
\begin{equation}\label{est-eta1}
\begin{split}
(\eta\|v\|_{1,|\beta|-1}) & ^{2|\beta|}\|v  \|_{1,|\beta|-1} \\
&\leq (4K(1-bt)^{-\sigma_1}4K(1-bt)^{-\sigma_{|\beta|-1}})^{2|\beta|}4K(1-bt)^{-\sigma_{|\beta|-1}}\\
&\leq (4K)^{4|\beta|+1}(1-bt)^{-(\sigma_1+\sigma_{|\beta|-1})2|\beta|-\sigma_{|\beta|-1}}\\
&\leq (4K)^{8m+1}(1-bt)^{-\sigma_{|\beta|}},
\end{split}
\end{equation}
since $(\sigma_1+\sigma_{|\beta|-1})2|\beta|+\sigma_{|\beta|-1}\leq(8m+1)\sigma_{|\beta|-1}=\sigma_{|\beta|}$ by~\eqref{de-sigma}. Similarly,
\begin{equation}\label{est-eta2}
\begin{split}
(\eta\|v\|_{1,2m}) ^{2J+\alpha}\|v\|_{1,2m}
&\leq (4K)^{4J+2\alpha+1}(1-bt)^{-(2J+\alpha)(\sigma_1+\sigma_{2m})-\sigma_{2m}}\\
&\leq (4K)^{4J+2\alpha+1}(1-bt)^{-(4J+2\alpha+1)\sigma_{2m}}\\
&=(4K)^{4J+2\alpha+1}(1-bt)^{\frac{2-N\alpha}{2}-\sigma_{2m+1}},
\end{split}
\end{equation}
where the last equality follows from the definition of $\sigma_{2m+1}$ by~\eqref{de-sigma}.
As well, if $2m+2\leq|\beta|\leq J$ and $\ell \in \{ 2, 3 \}$, then
\begin{equation}\label{est-eta3}
\begin{split}
(\eta\|v\|_{1,2m})  ^{2J}\|v\|_{ \ell ,|\beta|-1}
&\leq (4K)^{4J+1}(1-bt)^{-2J(\sigma_1+\sigma_{2m})-\sigma_{|\beta|-1}}\\
&\leq (4K)^{4J+1}(1-bt)^{-4J\sigma_{2m}-\sigma_{|\beta|-1}}\\
&=(4K)^{4J+1}(1-bt)^{-\sigma_{|\beta|}}
\end{split}
\end{equation}
where we used $2J(\sigma_1+\sigma_{2m})+\sigma_{|\beta|-1}\leq4J\sigma_{2m}+\sigma_{|\beta|-1}=\sigma_{|\beta|}$ by~\eqref{de-sigma}.

Since $\|v\|_{L^\infty}\leq 4K$ by~\eqref{uppbd-2}, it follows from~\eqref{fEIn1}  that, given any $\sigma >0$ and $0\le t< \frac {1} {b}$, 
\begin{equation*} 
\begin{split} 
\int _0^{t} ( 1-bs)^{-\frac{4-N\alpha}{2}-\sigma } \|v (s) \|_{L^\infty}^\alpha ds & \le \frac {2 (4K)^\alpha } {b (2-N\alpha + 2 \sigma ) } (1-bt )^{- \frac {2-N\alpha } {2} -\sigma } \\
&\le \frac { (4K)^\alpha } {b  \sigma  } (1-bt )^{- \frac {2-N\alpha } {2} -\sigma } .
\end{split} 
\end{equation*} 
Let $0< t' < \frac {1} {b}$ be defined by $(1-bt' )^{ \frac {2-N\alpha } {2}  } = \frac {1} {2}$,
i.e. $(1-bt')= 2^{- \frac {2} {2-N\alpha }}$. It follows from the above inequality that if $0\le t\le t'$, then
\begin{equation} \label{fStma1} 
\begin{split} 
\int _0^{t} ( 1-bs)^{-\frac{4-N\alpha}{2}-\sigma } \|v (s) \|_{L^\infty}^\alpha ds \le \frac { 2 (4K)^\alpha } { b  \sigma  } (1-bt )^{- \sigma } .
\end{split} 
\end{equation} 
Moreover, if $b\ge b_0$ and $t'\le  t< \Tstar $, then it follows from~\eqref{Decay-v} and~\eqref{fEIn1}  that 
\begin{equation} \label{fStma3} 
\begin{split} 
\int _{t'}^t ( 1-bs)^{-\frac{4-N\alpha}{2}-\sigma } \|v (s) \|_{L^\infty}^\alpha ds & \le 
\frac{b(2-N\alpha)}{2\alpha| \Re \lambda|} \int _{t'}^t  \frac{(1-bs)^{-1 -\sigma } } {1-(1-bs)^\frac{2-N\alpha}{2}} \\
& \le \frac{ 2-N\alpha }{ \sigma \alpha| \Re \lambda|}  (1-bt)^{-\sigma } .
\end{split} 
\end{equation} 
Using $\int _0^t = \int _0^{t'} + \int  _{ t' }^t$ if $t'<t< \Tstar$, we deduce  from~\eqref{fStma1} and~\eqref{fStma3} that for all $b\ge b_0$ and all $0\le t< \Tstar$,
\begin{equation} \label{est-int}
\int _0^{t} ( 1-bs)^{-\frac{4-N\alpha}{2}-\sigma } \|v (s) \|_{L^\infty}^\alpha ds \le  \Bigl(  \frac {2 (4K)^\alpha } {b  \sigma  } +  \frac{ 2-N\alpha }{ \sigma \alpha| \Re \lambda|}  \Bigr) (1-bt)^{-\sigma } .
\end{equation} 
Now, we are ready to estimate $\Psi _{ \Tstar }$ and the process is divided into four steps. We first estimate $\|\la x\ra^nv\|_{L^\infty}$. Since $\Re \lambda<0$, it follows from~\eqref{eq-v-1} and~\eqref{fDfnL}  that
\begin{equation*} 
\partial _t |v| \leq|L| \le  | \Delta v |,
\end{equation*} 
so that 
\begin{equation*} 
\begin{split} 
\la x\ra^n|v(t)| - \la x\ra^n|v_0| \le  \int^t_0\la x\ra^n|\Delta v|\,ds \le \int^t_0\|v\|_{ \Spa }\,ds \leq  \frac{8K \CST3}{b },
\end{split} 
\end{equation*} 
where we used~\eqref{bd-inter-v} in the last inequality.
Since $ \la x\ra^n|v_0| \le K $, we deduce that if $b \ge b_1 $ with $b_1$ given by~\eqref{fDfnbun}, then
\begin{equation}\label{est-de-0}
\|\la x\ra^nv\|_{L^\infty} \leq 2K.
\end{equation}
We next estimate $\|\la x\ra^nD^\beta v\|_{L^\infty}$ for $1\leq|\beta|\leq2m$. 
Applying~\eqref{est-solu-small-der} and~\eqref{fEstdpt1}, we obtain
\begin{equation} \label{fOOMM1} 
\|\la x\ra^nD^\beta v\|_{L^\infty}  \leq 2K  +|\lambda| \CST1\int^t_0(1-bs)^{-\frac{4-N\alpha}{2}}  \|\la x\ra^nD^\beta(|v|^\alpha v)\|  _{ L^\infty  } ds .
\end{equation} 
Using~\eqref{lbd-v}, \eqref{nonlin-est-1}-\eqref{nonlin-est-2}, \eqref{uppbd-2} and~\eqref{est-eta1}
and setting  $\kappa  =0$ if $ |\beta |=1$ and $\kappa (\beta ) =1$ if $ |\beta |\ge 2$, we see that
\begin{equation}  \label{fOOMM2} 
\begin{split} 
\|  \langle \cdot \rangle ^n  D^{\beta} ( |v|^\alpha v)   \|  _{L^{\infty}}  \le &  \CST2 \| v \| _{ L^\infty  }^\alpha  \| \langle \cdot\rangle ^{n}D^{\beta} v \| _{L^\infty} \\ &    + \kappa  \CST2  \| v \| _{ L^\infty  }^\alpha  ( \eta \|  v\| _{1, | \beta |  -1})^{2|  \beta|  }\|  v\|  _{1,  |  \beta|  -1} \\ \le & 2 \CST2 (4K)^{8m +1} \| v \| _{ L^\infty  }^\alpha (1-bs)^{-\sigma  _{  |\beta | }}  .
\end{split} 
\end{equation}
Applying now~\eqref{est-int} and using $\sigma  _{  |\beta | }\ge \sigma _1$, we deduce from~\eqref{fOOMM1}-\eqref{fOOMM2}  that
\begin{equation*} 
\|\la x\ra^nD^\beta v\|_{L^\infty}  \leq  2K  + 2  (4K)^{8m+1} |\lambda | \CST1 \CST2  \Bigl(  \frac {2 (4K)^\alpha } {b  \sigma  _1 }  +  \frac{ 2-N\alpha }{ \sigma  _1 \alpha| \Re \lambda|} \Bigr) (1-bt)^{-\sigma  _{  |\beta | } } .
\end{equation*} 
It follows, using also~\eqref{fDfnalphaun}, \eqref{fDfnbun} and~\eqref{est-de-0}, that
\begin{equation} \label{est-phi1}
\Phi_{1,\Tstar} \le 3K .
\end{equation} 

We next estimate similarly $\|\la x\ra^nD^\beta v\|_{L^2}$ for $2m+1\leq|\beta|\leq2m+2+k$. It follows from~\eqref{est-solu-lar-der} (with $\mu=0$) and~\eqref{fEstdpt1} that
\begin{equation*} 
\|\la x\ra^nD^\beta v\|_{L^2} \le 2K + 
\CST1|\lambda|\int^t_0(1-bs)^{-\frac{4-N\alpha}{2}}\|\la x\ra^nD^\beta(|v|^\alpha v)\|_{L^2}\,ds .
\end{equation*} 
Using~\eqref{nonlin-est-3}, \eqref{bd-T*}, \eqref{est-eta2} and~\eqref{est-eta3}, we see that
\begin{equation*}
\begin{split} 
\|\la x\ra^nD^\beta(|v|^\alpha v)\|_{L^2} \leq &  \CST2 (4K) (1-bs)^{- \sigma  _{  |\beta | }} \|v\|_{L^\infty}^\alpha \\ &
+\CST2 (4K)^{4J+2\alpha+1}(1-bs )^{\frac{2-N\alpha}{2}-\sigma_{2m+1}} \\ &+ \CST2 (4K)^{4J+1}(1-b s )^{-\sigma_{|\beta|}} \|v\|_{L^\infty}^\alpha ,
\end{split} 
\end{equation*} 
so that
\begin{equation*} 
\begin{split} 
\|\la x\ra^nD^\beta v\|_{L^2} \le  & 2K + 
\CST1 \CST2 (4K)^{4J+2\alpha+1} |\lambda|\int^t_0(1-bs)^{-1 - \sigma  _{ 2m+1 }} ds \\
&+ 
\CST1 \CST2 (4K)^{4J+2} | \lambda|\int^t_0(1-bs)^{-\frac{4-N\alpha}{2} - \sigma  _{  |\beta | }} \|v\|_{L^\infty}^\alpha  ds .
\end{split} 
\end{equation*} 
Applying~\eqref{fEIn1} and~\eqref{est-int} to estimate the integrals, we obtain
\begin{equation} \label{fEstInterm1} 
\begin{split} 
\|\la x\ra^nD^\beta v\|_{L^2} \le  & 2K + 
 \frac { \CST1 \CST2 (4K)^{4J+2\alpha+1} |\lambda| } {b \sigma  _{ 2m+1 }}(1-bt)^{-\sigma  _{ 2m+1 }} \\
+ 
\CST1 \CST2 & (4K)^{4J+2} | \lambda|   \Bigl(  \frac {2 (4K)^\alpha } {b  \sigma  _{  |\beta | } }  
 +  \frac{ 2-N\alpha }{ \sigma _{  |\beta | } \alpha| \Re \lambda|} \Bigr) (1-bt)^{-\sigma _{  |\beta | } }  .
\end{split} 
\end{equation} 
Using $\sigma _1\le \sigma  _{ 2m+1 }\le \sigma  _{  |\beta | } $, it follows that
\begin{equation*} 
\begin{split} 
\|\la x\ra^nD^\beta v\|_{L^2} \le  & 2K + 
 \frac { \CST1 \CST2 (4K)^{4J+2\alpha+1} |\lambda| } {b \sigma  _1 }(1-bt)^{-\sigma  _{  |\beta | }} \\
+ 
\CST1 \CST2 & (4K)^{4J+2} | \lambda|   \Bigl(  \frac {2 (4K)^\alpha } {b  \sigma  _1 }   +  \frac{ 2-N\alpha }{ \sigma _1 \alpha| \Re \lambda|} \Bigr) (1-bt)^{-\sigma _{  |\beta | } }  .
\end{split} 
\end{equation*} 
Using also~\eqref{fDfnalphaun} and~\eqref{fDfnbun}, we conclude that
\begin{equation} \label{est-phi2} 
\Phi  _{ 2, \Tstar } \le 3K.
\end{equation} 

Now we estimate $\|\la x\ra^{J-|\beta|}D^\beta v\|_{L^2}$ for $2m+k+3\leq|\beta|\leq J$. 
 It follows from~\eqref{est-solu-lar-der} (with $\mu= n+  |\beta |-J$, $\nu = k+1$) and~\eqref{fEstdpt1} that
\begin{equation*} 
\|\la x\ra^{J-|\beta|}D^\beta v\|_{L^2} \le 2K +  \CST1  |\lambda | \int^t_0  \|\la x\ra^{ J -  |\beta |}D^\beta ( |v|^\alpha v) \|_{L^2} ds .
\end{equation*} 
Using~\eqref{nonlin-est-4}, \eqref{bd-T*}, \eqref{est-eta2} and~\eqref{est-eta3}, we see that
\begin{equation*}
\begin{split} 
\|\la x\ra^{J-|\beta|}D^\beta(|v|^\alpha v)\|_{L^2} \leq &  \CST2 (4K) (1-bs)^{- \sigma  _{  |\beta | }} \|v\|_{L^\infty}^\alpha \\ &
+\CST2 (4K)^{4J+2\alpha+1}(1-bs )^{\frac{2-N\alpha}{2}-\sigma_{2m+1}} \\ &+ 2 \CST2 (4K)^{4J+1}(1-b s )^{-\sigma_{|\beta|}} \|v\|_{L^\infty}^\alpha ,
\end{split} 
\end{equation*} 
so that
\begin{equation} \label{fEstInterm2} 
\begin{split} 
\|\la x\ra^{ J -  |\beta | } D^\beta v\|_{L^2} \le  & 2K + 
\CST1 \CST2 (4K)^{4J+2\alpha+1} |\lambda|\int^t_0(1-bs)^{-1 - \sigma  _{ 2m+1 }} ds \\
&+ 
\CST1 \CST2 (4K)^{4J+2} | \lambda|\int^t_0(1-bs)^{-\frac{4-N\alpha}{2} - \sigma  _{  |\beta | }} \|v\|_{L^\infty}^\alpha  ds .
\end{split} 
\end{equation} 
The right-hand side of~\eqref{fEstInterm2} is similar to the right-hand side of~\eqref{fEstInterm1}, and we conclude as above that
\begin{equation} \label{est-phi3}
\Phi_{3,\Tstar} \le 3K .
\end{equation} 

Finally, we estimate $\Phi_{4, \Tstar}$, we we set
\begin{equation*} 
w (t,x)=\la x\ra^n|v(t,x)|.
\end{equation*} 
Multiplying~\eqref{eq-v-2} by $\la x\ra^{-n\alpha}$ and integrating in $t$, we obtain
\begin{equation*} 
\begin{split} 
\frac{1}{| w (t,x)|^\alpha}= & \frac{1}{| w (0, x)|^\alpha}+\alpha| \Re \lambda|\int^t_0\la x\ra^{-n\alpha}(1-bs)^{-\frac{4-N\alpha}{2}}\,ds\\ &
-\alpha\int^t_0 | w (s,x)|^{-\alpha-1} \la x\ra^{n}L(s,x)\,ds.
\end{split} 
\end{equation*} 
Applying~\eqref{bd-T*}, we see that $ \la x\ra^{n}  |L| \le \la x\ra^{n}  |\Delta v|\le 4K (1-bs)^{-\sigma  _2}$, and  $  |w |^{-\alpha -1}\le (4K)^{\alpha +1} (1-bs)^{- (\alpha +1)\sigma _1} $. Since $(\alpha +1)\sigma _1 + \sigma _2 \le \sigma _3$ by~\eqref{de-sigma}, we deduce that
\begin{equation*} 
\frac{1}{| w (t,x)|^\alpha} \le   K^\alpha  +\alpha| \Re \lambda|\int^t_0 (1-bs)^{-\frac{4-N\alpha}{2}}\,ds 
+ \alpha (4K)^{\alpha +2} \int^t_0 (1-bs)^{- \sigma _3} ds.
\end{equation*} 
Since $ -\frac{4-N\alpha}{2}= -1 -\alpha \sigma _1 + (\alpha \sigma _1 - \frac {2-N\alpha } {2}) $, and since by~\eqref{fDfnalphaun} 
$\alpha  \sigma _1 \ge  \frac {2-N\alpha } {2} $,
we see that $ -\frac{4-N\alpha}{2} \ge  -1 -\alpha \sigma _1 $; and so, using~\eqref{fEIn1} and $\sigma _3\le \sigma _J\le \frac {1} {2} $, 
\begin{equation*} 
\begin{split} 
\frac{1}{| w (t,x)|^\alpha} & \le   K^\alpha  +\alpha| \Re \lambda|\int^t_0 (1-bs)^{- 1 - \alpha \sigma _1 }\,ds 
+ \alpha (4K)^{\alpha +2} \int^t_0 (1-bs)^{- \sigma _3} ds \\
& \le   K^\alpha  +  \frac {| \Re \lambda| }{b \sigma _1}(1-bt)^{-\alpha \sigma _1} 
+  \frac {2 \alpha (4K)^{\alpha +2}} {b  } .
\end{split} 
\end{equation*} 
It follows that
\begin{equation*} 
\Phi  _{ 4, \Tstar }^\alpha \le K^\alpha  +  \frac {| \Re \lambda| }{b \sigma _1}
+  \frac {2 \alpha (4K)^{\alpha +2}} {b  } .
\end{equation*} 
Using~\eqref{fDfnbun}, we deduce that for $b\ge b_1$, 
\begin{equation} \label{est-phi4}
\Phi  _{ 4, \Tstar }^\alpha \le 3^{\frac {1} {N}}K^\alpha  \le ( 3 K)^\alpha ,
\end{equation} 
since $\alpha \ge \frac {1} {N}$. 
Estimates~\eqref{est-phi1}, \eqref{est-phi2}, \eqref{est-phi3} and~\eqref{est-phi4} yield
$\Psi_{ \Tstar }\le 3K$, which leads to a contradiction with~\eqref{bd-T*}. This completes the proof.
\end{proof}

\section{Asymptotics for~\eqref{NLS-1}} \label{sAsympt} 
Throughout this section, we assume~\eqref{def-knm}-\eqref{def-J} and we let $ \Spa $ be defined by~\eqref{fSpa1}-\eqref{fSpa2}. 
We describe the asymptotic behavior as $t\to \frac {1} {b}$ of the solutions of~\eqref{NLS-1} given by Proposition~\ref{BD-GWP}. More precisely, we have the following result.

\begin{prop}\label{asy-v}
Suppose $\Re \lambda<0$.
Let $K\ge 1$, let $\alpha _1 \in ( 0 , \frac {2} {N} ) $ be given by~\eqref{fDfnalphaun} 
and  let $b_1>0$ be given by~\eqref{fDfnbun}.
Let  $v_0\in \Spa $ satisfy~\eqref{IV}, and let $v\in C([0,\frac1b), \Spa )$ be the solution of~\eqref{NLS-1} given by Proposition~$\ref{BD-GWP}$. There exist $f_0, \omega_0 \in L^\infty(\R^N)$, with $f_0$ real-valued,  $\|f_0 \|_{L^\infty}\leq\frac12$,  and $\la \cdot \ra^n\omega_0 \in L^\infty(\R^N)$, such that
\begin{equation}\label{asymptotic}
 \|\la \cdot \ra^n (v(t )-\omega_0 \psi(t )e^{ i\theta(t )} ) \|_{L^\infty}\leq C(1-bt)^{\frac {1} {2} }
\end{equation}
for all $t \in [0, \frac {1} {b}) $, where
\begin{equation}\label{def-psi}
\psi(t,x)=\Big( \frac{1+f_0(x)}{1+f_0(x)+\frac{ 2 \alpha| \Re \lambda|}{b (2-N\alpha) }|v_0(x)|^\alpha[(1-bt)^{-\frac{2-N\alpha}{2}}-1]}\Big)^\frac1\alpha
\end{equation}
and
\begin{equation}\label{def-theta}
\theta(t,x) =\frac{ \Im \lambda}{  \Re \lambda}\log( \psi(t,x)).
\end{equation}
Moreover,
\begin{equation} \label{fDscromzero} 
 |\omega _0 |^\alpha = \frac { |v_0 |^\alpha } {1+ f_0},
\end{equation} 
so that $\frac {2} {3}  | v_0|^\alpha  \le  |\omega _0|^\alpha  \le 2 | v_0|^\alpha $.
In addition,
\begin{equation}\label{converge}
(1-bt)^{-\frac{2-N\alpha}{2}}\|v (t) \|_{L^\infty}^\alpha \goto_{ t\uparrow \frac{1}{b} }\frac{b (2-N\alpha ) }{ 2 \alpha| \Re \lambda|}, 
\end{equation}
and 
\begin{equation}\label{decay-l2-v}
(1- b t )^{ (\frac {1} {\alpha } - \frac {N} {2}) ( 1- \frac {N} {2n} )} \lesssim \| v (t)\|_{L^2} \lesssim (1 - b t )^{ (\frac {1} {\alpha } - \frac {N} {2}) ( 1- \frac {N} {2n} )}
\end{equation}
as $t\to \frac {1} {b} $, where $n$ is given by~\eqref{def-knm}.  
\end{prop}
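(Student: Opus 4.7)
The plan is to combine the representation~\eqref{eq-v-4} of $|v|^\alpha$ derived in the proof of Lemma~\ref{lem-Decay-v} with an analogous integration of the phase of $v$, and from these read off the profile $\omega_0\psi e^{i\theta}$ and the decay rates. Throughout we use freely the bounds of Proposition~\ref{BD-GWP}, and in particular $\sigma_J\le 1/2$ by~\eqref{festsigj2}.

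\emph{Construction of $f_0$ and $|\omega_0|$.} With $A(t,x)=\frac{2\alpha|\Re\lambda|}{b(2-N\alpha)}|v_0(x)|^\alpha[(1-bt)^{-(2-N\alpha)/2}-1]$, equation~\eqref{eq-v-4} reads $|v(t,x)|^\alpha=|v_0(x)|^\alpha/(1+f(t,x)+A(t,x))$ with $f$ given by~\eqref{fDfnf1}. By~\eqref{fEstf1} the integrand in $f$ is bounded by $(4K)^{2\alpha+2}(1-bs)^{-\sigma_3}$ with $\sigma_3\le 1/2$, so $f(t,\cdot)$ is Cauchy in $L^\infty(\R^N)$ as $t\uparrow 1/b$ with real-valued limit $f_0$ satisfying $\|f_0\|_{L^\infty}\le 1/2$ and $\|f(t,\cdot)-f_0\|_{L^\infty}\le C(1-bt)^{1-\sigma_3}\le C(1-bt)^{1/2}$. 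We define $|\omega_0|^\alpha:=|v_0|^\alpha/(1+f_0)$, which gives~\eqref{fDscromzero}, the bound $2|v_0|^\alpha/3\le|\omega_0|^\alpha\le 2|v_0|^\alpha$, and hence $\langle\cdot\rangle^n|\omega_0|\in L^\infty$ by~\eqref{IV}.

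\emph{Integration of the phase and definition of $\arg\omega_0$.} Since $v$ never vanishes (Proposition~\ref{LWP}), we write $v=|v|e^{i\phi}$. Taking the imaginary part of $\overline{v}\partial_t v=i\overline{v}\Delta v+\lambda(1-bt)^{-(4-N\alpha)/2}|v|^{\alpha+2}$ yields
\[
\partial_t\phi=\frac{\Re(\overline{v}\Delta v)}{|v|^2}+\Im\lambda\,(1-bt)^{-\frac{4-N\alpha}{2}}|v|^\alpha.
\]
The last summand is not integrable in $t$ up to $1/b$; to remove the singularity we use~\eqref{eq-v-2} to write $-\alpha\Re\lambda(1-bt)^{-(4-N\alpha)/2}=\partial_t|v|^{-\alpha}+\alpha|v|^{-\alpha-1}L$, whence $\Im\lambda(1-bt)^{-(4-N\alpha)/2}|v|^\alpha=(\Im\lambda/\Re\lambda)\partial_t\log|v|-(\Im\lambda/\Re\lambda)L/|v|$. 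Integrating in $t$ produces
\[
\phi(t,x)=\phi(0,x)-\frac{\Im\lambda}{\alpha\Re\lambda}\log(1+f(t,x)+A(t,x))+I(t,x),
\]
where $I$ is the time-integral of $\Re(\overline{v}\Delta v)/|v|^2-(\Im\lambda/\Re\lambda)L/|v|$. Proposition~\ref{BD-GWP} gives $|\partial_t I|\le C(1-bs)^{-\sigma_1-\sigma_2}\le C(1-bs)^{-\sigma_3}$ uniformly in $x$, so $I(t,\cdot)\to I_\infty\in L^\infty(\R^N)$ with rate $(1-bt)^{1/2}$. Since $1+f_0+A\ge 1/2$ and $|f-f_0|\le C(1-bt)^{1/2}$ we have $|\log(1+f+A)-\log(1+f_0+A)|\le C(1-bt)^{1/2}$, and therefore $\phi(t,x)-\theta(t,x)$ converges uniformly in $x$, at rate $(1-bt)^{1/2}$, to
\[
\phi_\infty(x):=\phi(0,x)+I_\infty(x)-\frac{\Im\lambda}{\alpha\Re\lambda}\log(1+f_0(x))\in L^\infty(\R^N;\R/2\pi\Z).
\]
We set $\omega_0:=|\omega_0|e^{i\phi_\infty}$.

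\emph{Conclusion of~\eqref{asymptotic}, \eqref{converge} and~\eqref{decay-l2-v}.} Writing
\[
v(t)-\omega_0\psi(t)e^{i\theta(t)}=(|v(t)|-|\omega_0|\psi(t))e^{i\phi(t)}+|\omega_0|\psi(t)\bigl(e^{i\phi(t)}-e^{i(\phi_\infty+\theta(t))}\bigr),
\]
the mean-value theorem applied to $u\mapsto u^{-1/\alpha}$ (with $u\ge 1/2$) gives $|(1+f+A)^{-1/\alpha}-(1+f_0+A)^{-1/\alpha}|\le C|f-f_0|$, hence $\langle x\rangle^n||v|-|\omega_0|\psi|\le C\langle x\rangle^n|v_0|(1-bt)^{1/2}\le CK(1-bt)^{1/2}$; the phase estimate from the previous step, multiplied by $|\omega_0|\psi\le 2^{1/\alpha}|v_0|\lesssim\langle x\rangle^{-n}$, contributes the same order, which yields~\eqref{asymptotic}. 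Letting $t\uparrow 1/b$ in $|v|^\alpha=|v_0|^\alpha/(1+f+A)$ with $A\to\infty$ and taking the supremum over $x$ produces~\eqref{converge}. Finally, for~\eqref{decay-l2-v}, split $\int_{\R^N}|v(t,x)|^2dx$ at the scale $R=(1-bt)^{-(2-N\alpha)/(2n\alpha)}$ (where $|v_0|^\alpha\sim(1-bt)^{(2-N\alpha)/2}$): for $|x|\lesssim R$ the term $A$ dominates and $|v|^2\sim(1-bt)^{(2-N\alpha)/\alpha}$; for $|x|\gtrsim R$ we have $|v|^2\sim|v_0|^2\sim\langle x\rangle^{-2n}$. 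Both contributions are of order $(1-bt)^{2(1/\alpha-N/2)(1-N/(2n))}$, and~\eqref{ID} provides the matching lower bound. The main technical obstacle lies in the phase step: one must identify $\theta=(\Im\lambda/\Re\lambda)\log\psi$ as the exact diverging part of $\phi$, so that the subtracted remainder is a bounded function converging to a bounded limit at rate $(1-bt)^{1/2}$.
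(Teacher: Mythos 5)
Your proposal is correct, and the key step is organized genuinely differently from the paper. You work in polar coordinates $v=|v|e^{i\phi}$: the modulus is treated exactly as in the paper (via the representation~\eqref{eq-v-4} and the limit $f_0$ of $f$), while for the phase you integrate $\partial_t\phi=\Re(\overline v\Delta v)/|v|^2+\Im\lambda\,(1-bt)^{-\frac{4-N\alpha}{2}}|v|^\alpha$ and use~\eqref{eq-v-2} to recognize the non-integrable part of the last term as $\frac{\Im\lambda}{\alpha\Re\lambda}\partial_t\log(1+f+A)$ up to a remainder that is $O((1-bs)^{-\sigma_3})$, hence integrable; $\theta$ then emerges as the exactly divergent part of $\phi$, and $\arg\omega_0$ as the limit of what is left. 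The paper instead sets $\omega=ve^{-i\theta}/\psi$ and shows directly that $\partial_t\omega$ is integrable in time (equation~\eqref{eq-omega}), which packages modulus and phase together, requires only the single lower bound $\psi^{-\alpha}\lesssim(1-bt)^{-\alpha\sigma_1}$ from Lemma~\ref{lem-Decay-v}, and avoids choosing a branch of the argument; your route needs a continuous branch of $\phi$ (harmless, since $\R^N$ is simply connected and only $e^{i\phi_\infty}$ enters the conclusion), but in exchange makes transparent why $\theta=\frac{\Im\lambda}{\Re\lambda}\log\psi$ is the right phase correction. Your treatment of~\eqref{decay-l2-v} is also slightly more direct: you integrate the two-sided bounds for $|v|$ coming from~\eqref{eq-v-4} itself over the two regions $|x|\lessgtr(1-bt)^{-\frac{2-N\alpha}{2n\alpha}}$, rather than first computing $\|\omega_0\psi\|_{L^2}$ and then transferring to $v$ via~\eqref{asymptotic}, which is where the paper invokes $\alpha>\frac{2}{N+1}$ to make the $O((1-bt)^{1/2})$ error negligible. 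The remaining points---the bound $\|f_0\|_{L^\infty}\le\frac14$, identity~\eqref{fDscromzero}, and~\eqref{converge} obtained from the uniform upper bound of Lemma~\ref{lem-Decay-v} together with a pointwise lower bound at a point where $v_0\ne0$---coincide with the paper's argument.
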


\begin{proof}
We let $f$ be defined by~\eqref{fDfnf1}. It follows from~\eqref{fEstf1} that  $f(t, \cdot )$ is convergent in $L^\infty(\R^N)$ as $t\uparrow\frac1b$. Then $f$ can be extended to a continuous function $[0,\frac1b]\rightarrow L^\infty(\R^N)$ and we set
\begin{equation*}
f_0(x)=f  \Bigl( \frac1b,x  \Bigr) =-\alpha\int^\frac1b_0|v_0(x)|^\alpha|v(s,x)|^{-\alpha-1}L(s,x)\,ds.
\end{equation*}
By using~\eqref{fEstf1}, \eqref{bd-f}, \eqref{bd-f-1} and $\sigma _3\le \sigma _J\le \frac {1} {2} $ (see~\eqref{festsigj2}), we have for all $0\leq t\leq\frac1b$
\begin{align}
\|f(t)-f_0\|_{L^\infty}&\leq\frac14(1-bt)^{1- \sigma _3 }, \label{diff-f-f0}\\
\|f(t)\|_{L^\infty}&\leq\frac14 . \label{bd-f0}
\end{align}
In particular, $1+ f_0 >0$, so that by~\eqref{def-psi},
\begin{equation} \label{fEstPsiinfty} 
0 < \psi (t,x) \le 1
\end{equation}  
for all $0\le t< \frac {1} {b}$ and $x\in \R^N $.
Moreover, it follows from~\eqref{bd-f0} that
\begin{equation}\label{bd-1}
\Big\|\frac{1}{1+f_0(x)+\frac{ 2\alpha| \Re \lambda|}{b ( 2-N\alpha ) }|v_0(x)|^\alpha[(1-bt)^{-\frac{2-N\alpha}{2}}-1]}\Big\|_{L^\infty}\leq2
\end{equation}
for all $0\leq t<\frac1b.$
We set
\begin{equation}\label{def-tilde-v}
\widetilde{v}(t,x)=\Big( \frac{|v_0(x)|^\alpha}{1+f_0(x)+  \frac{ 2 \alpha| \Re \lambda|}{b (2-N\alpha ) }|v_0(x)|^\alpha[(1-bt)^{-\frac{2-N\alpha}{2}}-1]}\Big)^\frac{1}{\alpha}.
\end{equation}
It follows from~\eqref{IV} and~\eqref{bd-1} that
\begin{equation*}
\|\la x\ra^n\widetilde v(t)\|_{L^\infty}\leq 2^\frac1\alpha K,
\end{equation*}
and we deduce from~\eqref{eq-v-4}, \eqref{diff-f-f0} and~\eqref{bd-1} that
\begin{equation}\label{diff-v-tilde-v}
\|\la \cdot \ra^{n\alpha}(|v(t, \cdot )|^\alpha-\widetilde v(t, \cdot )^\alpha)\|_{L^\infty}\leq  \|\la \cdot \ra^n v_0 \|_{L^\infty}^\alpha(1-bt)^{ 1-\sigma _3 }\leq K^\alpha(1-bt)^{ 1- \sigma _3 }
\end{equation}
for all $0\leq t<\frac1b$.
Next, we introduce the decomposition
\begin{equation}\label{decom-v}
v(t,x)=\omega(t,x)\psi(t,x)e^{ i\theta(t,x)},
\end{equation}
where $\psi(t,x)$ and $\theta(t,x)$ are defined by~\eqref{def-psi} and~\eqref{def-theta} respectively. Differentiating~\eqref{decom-v} with respect to $t$, we obtain
\begin{equation} \label{fEqomega} 
\partial _t \omega = \frac{e^{ - i\theta}}{\psi} \partial _t v  - \omega\frac{ \partial _t \psi}{\psi} - i \omega \partial _t \theta .
\end{equation}
On the other hand, it follows easily from~\eqref{def-psi},~\eqref{def-theta} and~\eqref{def-tilde-v} that
\begin{align*}
\frac{ \partial _t \psi }{\psi}&=\Re \lambda(1-bt)^{-\frac{4-N\alpha}{2}}\widetilde v^\alpha,\\
 \partial _t \theta &= \Im \lambda(1-bt)^{-\frac{4-N\alpha}{2}}\widetilde v^\alpha.
\end{align*}
Therefore, we deduce from~\eqref{fEqomega}, \eqref{decom-v} and~\eqref{NLS-1} that
\begin{equation} \label{eq-omega}
\begin{split}
 \partial _t \omega &= \frac{e^{ - i\theta}}{\psi} \partial _t v -\omega(1-bt)^{-\frac{4-N\alpha}{2}}\lambda\widetilde v^\alpha\\
         &=\frac{e^{ - i\theta}}{\psi}(  \partial _t v-\lambda(1-bt)^{-\frac{4-N\alpha}{2}}\widetilde v^\alpha v)\\
         &=\frac{e^{ - i \theta}}{\psi}( i \Delta v+\lambda(1-bt)^{-\frac{4-N\alpha}{2}}(|v|^\alpha-\widetilde v^\alpha)v) . 
\end{split}
\end{equation}
Next, it follows from~\eqref{def-psi} and the property $1+f_0\geq\frac12$ that
\begin{equation} \label{bd-1/psi-0} 
\psi (t, x) ^{-\alpha } \le 1 +  \frac{ 2 \alpha| \Re \lambda|}{b (2-N\alpha ) }|v_0(x)|^\alpha (1-bt)^{-\frac{2-N\alpha}{2}} .
\end{equation} 
Moreover, we deduce from~\eqref{Decay-v} that if $t' \le t< \frac {1} {b}$ where $t'\in (0, \frac {1} {b})$ is defined by $(1-bt' )^\frac{2-N\alpha}{2} = \frac {1} {2}$, then
\begin{equation*}
|v(t,x) | ^\alpha \leq \frac{b(2-N\alpha)}{\alpha| \Re \lambda|} (1-bt)^\frac{2-N\alpha}{2},
\end{equation*}
hence
\begin{equation*}
(1-bt)^{- \frac{2-N\alpha}{2} } \leq \frac{b(2-N\alpha)}{\alpha| \Re \lambda|} |v(t,x) | ^{- \alpha} .
\end{equation*}
Therefore, it follows from~\eqref{bd-1/psi-0} that $\psi (t, x) ^{-\alpha } \le 1 + 2 |v_0(x)|^\alpha |v(t,x) | ^{- \alpha}  $.
Applying~\eqref{uppbd-3}, \eqref{IV} and~\eqref{UB}, we conclude that
\begin{equation} \label{bd-1/psi} 
\psi (t, x) ^{-\alpha } \le C(1-bt)^{-\alpha\sigma_1}
\end{equation} 
for $t'\le t<\frac {1} {b}$. 
It follows from~\eqref{eq-omega} and~\eqref{bd-1/psi} that
\begin{equation*} 
\begin{split} 
\|\la \cdot \ra^n \partial _t \omega \|_{L^\infty} & \\\le C(1- & bt)^{- \sigma_1} [ \|\la \cdot \ra^n \Delta v \|_{L^\infty}
+ (1-bt)^{-\frac{4-N\alpha}{2}}\||v|^\alpha-\widetilde v^\alpha\|_{L^\infty}\|\la \cdot \ra^nv\|_{L^\infty} ] .
\end{split} 
\end{equation*} 
Since $ \|\la \cdot \ra^n\Delta v\|_{L^\infty}\leq 4K(1-bt)^{-\sigma_2}$ and $\|\la \cdot \ra^nv\|_{L^\infty}\leq4K$ by~\eqref{UB}, we deduce using~\eqref{diff-v-tilde-v} that
\begin{equation}\label{est-omega-t}
\|\la \cdot \ra^n  \partial _t \omega \|_{L^\infty}\leq C(1-bt)^{-\sigma_1}\Big[(1-bt)^{-\sigma_2}+(1-bt)^{-\frac{2-N\alpha}{2}-\sigma_3}\Big]\leq C(1-bt)^{ - \frac {1} {2} },
\end{equation}
where we used $\sigma_1+\sigma_2\leq\sigma_1+\frac{2-N\alpha}{2}+\sigma_3\leq\sigma_J\le \frac {1} {2} $ by~\eqref{de-sigma} and~\eqref{festsigj2}.
It follows from~\eqref{est-omega-t} that if $t' \le t < s < \frac {1} {b}$, then
\begin{equation*}
\|\la \cdot \ra^n(\omega(t)-\omega(s))\|_{L^\infty}\leq C(1-bt)^{\frac {1} {2} },
\end{equation*}
so that there exists $\omega_0$ such that $\la x\ra^n\omega_0\in L^\infty(\R^N)$ and
\begin{equation}\label{diff-omega}
\|\la \cdot \ra^n(\omega(t)-\omega_0)\|_{L^\infty}\leq C(1-bt)^{ \frac {1} {2} }
\end{equation}
for all $ t ' \leq t<\frac1b$.
Using~\eqref{decom-v}, \eqref{fEstPsiinfty}  and~\eqref{diff-omega}, we obtain
\begin{equation*}
\|\la \cdot \ra^n(v(t )-\omega_0\psi(t )e^{ i\theta(t )})\|_{L^\infty}\leq\|\la \cdot \ra^n(\omega(t)-\omega_0)\|_{L^\infty}\|\psi\|_{L^\infty}
\leq C(1-bt)^{ \frac {1} {2} },
\end{equation*}
which proves~\eqref{asymptotic}. 

Next, we prove~\eqref{fDscromzero}.  It follows from~\eqref{asymptotic} (recall that $0\le \psi \le 1$) that
\begin{equation*}
 \|  \, |v(t )| - |\omega_0| \psi(t )   \|_{L^\infty}\leq C(1-bt)^{\frac {1} {2} }.
\end{equation*}
Using the elementary inequalities $ |x^\alpha - y^\alpha | \le  |x-y|^\alpha $ if $\alpha \le 1$ and  $ |x^\alpha - y^\alpha | \le \alpha ( x^{\alpha -1}+ y^{\alpha -1) }  |x-y| $ if $\alpha \ge 1$, and the boundedness of $ \|\langle \cdot \rangle ^n v\| _{  L^\infty  }$, we deduce that 
\begin{equation*}
\| \, |v(t, \cdot )|^\alpha- ( |\omega_0| \psi(t ))^\alpha \|_{L^\infty} \le C ( (1-bt)^{\frac {1} {2}}  + (1-bt)^{\frac {\alpha } {2}} )
\end{equation*} 
Moreover, it follows from~\eqref{diff-v-tilde-v} and $\sigma _3\le \frac {1} {2}$  that
\begin{equation*}
\| \, |v(t, \cdot )|^\alpha-\widetilde v(t, \cdot )^\alpha \|_{L^\infty} \leq K^\alpha(1-bt)^{ \frac {1} {2} } .
\end{equation*}
Thus we see that 
\begin{equation*}
\| \widetilde{v} (t, \cdot )^\alpha- ( |\omega_0| \psi(t ))^\alpha \|_{L^\infty} \le C (1-bt)^{\frac {\rho  } {2}}  ,
\end{equation*} 
where $\rho = \min\{ \alpha , 1\}$.
Using the explicit expressions~\eqref{def-psi} and~\eqref{def-tilde-v}, we obtain
\begin{equation*}
 \frac{ |\,  |v_0 (x)|^\alpha  - (1+f_0(x))  |\omega _0 (x) |^\alpha |}{1+f_0(x)+\frac{ 2 \alpha| \Re \lambda|}{b (2-N\alpha) }|v_0(x)|^\alpha[(1-bt)^{-\frac{2-N\alpha}{2}}-1]} \le C (1-bt)^{\frac {\rho  } {2}} .
\end{equation*} 
For $\frac {1} {2b}\le t< \frac {1} {b}$, we have
\begin{equation*} 
1+f_0(x)+\frac{ 2 \alpha| \Re \lambda|}{b (2-N\alpha) }|v_0(x)|^\alpha[(1-bt)^{-\frac{2-N\alpha}{2}}-1] \le  C (1-bt)^{-\frac{2-N\alpha}{2}} ,
\end{equation*} 
so that
\begin{equation*} 
 |\,  |v_0 (x)|^\alpha  - (1+f_0(x))  |\omega _0 (x) |^\alpha | \le C (1-bt)^{ \frac {\rho } {2} -\frac{2-N\alpha}{2}} .
\end{equation*} 
since $\alpha > \min\{ \frac {1} {N}, \frac {2} {N+1}\}$ by~\eqref{fDfnalphaun}, we see that $ \frac {\rho } {2} -\frac{2-N\alpha}{2} >0$. Letting $t\uparrow \frac {1} {b}$ in the above inequality, we obtain~\eqref{fDscromzero}. 

Now, we prove~\eqref{converge}. Set
\begin{equation*} 
Z (t, x)= (1-bt)^{-\frac{2-N\alpha}{2}}  | \omega_0 (x) \psi(t, w )e^{ i\theta(t, x )}|^\alpha  =  (1-bt)^{-\frac{2-N\alpha}{2}}  | \omega_0 (x) \psi(t, w ) |^\alpha  .
\end{equation*} 
It follows from~\eqref{def-psi} and~\eqref{fDscromzero}   that
\begin{equation*}
Z (t, x) =\frac{|v_0 (x) |^\alpha(1-bt)^{-\frac{2-N\alpha}{2}}}{1+f_0 (x) +\frac{ 2 \alpha| \Re \lambda|}{b ( 2-N\alpha ) }|v_0 (x) |^\alpha[(1-bt)^{-\frac{2-N\alpha}{2}}-1]}.
\end{equation*}
Since $1+ f_0\ge 0$ by~\eqref{bd-f0}, we obtain
\begin{equation*}
Z(t,x)  \leq \frac{b (2-N\alpha ) }{ 2 \alpha| \Re \lambda|}\frac{1}{1-(1-bt)^\frac{2-N\alpha}{2}} 
\end{equation*}
so that
\begin{equation*} 
\limsup  _{ t\uparrow \frac {1} {b} }  \| Z(t) \| _{ L^\infty  }  \le \frac{b (2-N\alpha ) }{ 2 \alpha| \Re \lambda|} . 
\end{equation*} 
Moreover, $1+ f_0 \le 2$, so that
\begin{equation*}
Z (t,0)\geq\frac{|v_0(0)|^\alpha(1-bt)^{-\frac{2-N\alpha}{2}}}{2+\frac{ 2 \alpha| \Re \lambda|}{b (2-N\alpha ) }|v_0(0)|^\alpha[(1-bt)^{-\frac{2-N\alpha}{2}}-1]}
\end{equation*}
Since $|v_0(0)|>0$ by~\eqref{IV}, we deduce that
\begin{equation*}
\liminf_{t\uparrow\frac1b}   \| Z(t) \| _{ L^\infty  }   \ge \frac{b (2-N\alpha ) }{ 2 \alpha| \Re \lambda|} .
\end{equation*}
Thus we see that $ \| Z(t) \| _{ L^\infty  }   \to \frac{b (2-N\alpha ) }{ 2 \alpha| \Re \lambda|}$ as $t\uparrow \frac{1}{b} $. On the other hand, it follows from~\eqref{diff-v-tilde-v} and~\eqref{fDscromzero}  that 
\begin{equation*} 
| (1-bt)^{-\frac{2-N\alpha}{2}}   \|v (t) \|_{L^\infty}^\alpha -  \| Z  (t) \|_{L^\infty}  | \le K^\alpha(1-bt)^{ \frac {N\alpha } {2}- \sigma _3  }.
\end{equation*} 
Since $\frac {N\alpha } {2}\ge \frac {1} {2} > \sigma _3$, \eqref{converge} follows.

Finally, we prove~\eqref{decay-l2-v}. 
It follows from~\eqref{def-psi} and~\eqref{fDscromzero}  that
\begin{equation*} 
 |\omega _0 (x) |^2 \psi (t,x)^2
 =\Big( \frac{|v_0(x)|^\alpha}{1+f_0(x)+  \frac{ 2 \alpha| \Re \lambda|}{b (2-N\alpha ) }|v_0(x)|^\alpha[ (1-bt)^{-\frac{2-N\alpha}{2}}-1]}\Big)^\frac{2}{\alpha}.
\end{equation*} 
Recall that $\frac {1} {2} \le 1+ f_0 \le \frac {3} {2}$ and $\frac {1} {K} \langle x\rangle ^{-n} \le  |v_0 (x) | \le K \langle x\rangle ^{-n}$.  Therefore, for $\frac {1} {2b}\le t< \frac {1} {b}$, we have
\begin{equation} \label{fEstInter1} 
 \frac { a \langle x\rangle ^{-2n}  } { ( 1 + (1-bt)^{-\frac{2-N\alpha}{2}} \langle x\rangle ^{-n \alpha } )^{\frac {2} {\alpha }} }\le  |\omega _0 |^2 \psi^2 \le  \frac { A \langle x\rangle ^{-2n}  } { ( 1 + (1-bt)^{-\frac{2-N\alpha}{2}} \langle x\rangle ^{-n \alpha } )^{\frac {2} {\alpha }} } ,
\end{equation} 
for some constants $0<a \le A <\infty $. 
If $ |x| \ge (1-bt)^{- \frac {2- N\alpha } {2n\alpha }}$, then $ |\omega _0|^2 \psi ^2 \gtrsim  |x|^{-2n}$ by the first inequality in~\eqref{fEstInter1}.
Since also $ |\omega _0|^2 \psi ^2 \lesssim  |x|^{-2n}$ by the second inequality in~\eqref{fEstInter1}, we deduce that
\begin{equation*} 
a_1  (1 - b t )^{ (\frac {2} {\alpha } - N ) ( 1- \frac {N} {2n} )} \le \int  _{  |x| > (1-bt)^{- \frac {2- N\alpha } {2n\alpha }}  }  |\omega _0|^2 \psi ^2 \le A_1 (1 - b t )^{ (\frac {2} {\alpha } - N ) ( 1- \frac {N} {2n} )} ,
\end{equation*} 
for some constants $0<a_1 \le A_1 <\infty $. 
If $ |x| \le  (1-bt)^{- \frac {2- N\alpha } {2n\alpha }}$, then $ |\omega _0|^2 \psi ^2 \gtrsim (1-bt)^{ \frac {2- N \alpha } {\alpha } }$ by the first inequality in~\eqref{fEstInter1}.
Since also $ |\omega _0|^2 \psi ^2 \lesssim (1-bt)^{ \frac {2- N\alpha } {\alpha } }$ by the second inequality in~\eqref{fEstInter1}, we deduce that
\begin{equation*} 
a_2  (1 - b t )^{ (\frac {2} {\alpha } - N ) ( 1- \frac {N} {2n} )} \le \int  _{  |x| < (1-bt)^{- \frac {2- N\alpha } {2n\alpha }}  }  |\omega _0|^2 \psi ^2 \le A_2 (1 - b t )^{ (\frac {2} {\alpha } - N ) ( 1- \frac {N} {2n} )} ,
\end{equation*} 
for some constants $0<a_2 \le A_2 <\infty $. It follows that 
\begin{equation} \label{fEstInter2} 
a_3  (1 - b t )^{ (\frac {1} {\alpha } - \frac {N} {2} ) ( 1- \frac {N} {2n} )} \le  \| \omega _0 \psi (t) e^{ i\theta(t )} \| _{ L^2  } \le A_3  (1 - b t )^{ (\frac {1} {\alpha } - \frac {N} {2} ) ( 1- \frac {N} {2n} )} ,
\end{equation} 
for some constants $0<a_3 \le A_3 <\infty $.
On the other hand, estimate~\eqref{asymptotic} implies (since $n > \frac {N} {2}$)
\begin{equation} \label{fEstInter3} 
 \| v(t )-\omega_0 \psi(t )e^{ i\theta(t )}  \|_{L^2} \leq C(1-bt)^{\frac {1} {2} } .
\end{equation}
Since $\alpha >\frac {2} {N+1}$, we have
\begin{equation*} 
\Bigl(\frac {1} {\alpha } - \frac {N} {2} \Bigr) \Bigl( 1- \frac {N} {2n} \Bigr) < \frac {1} {\alpha } - \frac {N} {2} < \frac {1} {2}
\end{equation*} 
and~\eqref{decay-l2-v} follows from~\eqref{fEstInter2}-\eqref{fEstInter3}.  
\end{proof}

\section{Proof of Theorem~$\ref{ethm1}$} \label{sPrfThm1} 
Let $ \Re \lambda<0$ and $K\geq1$, and let $v_0\in \Spa $ satisfy~\eqref{IV}. Let $\alpha_1$ and $b_1$ be given by Proposition~\ref{BD-GWP}. Given $\alpha_1 \leq\alpha<\frac2N$ and $b\geq b_1$, let $v\in C([0,1/b), \Spa )$ be the corresponding solution of~\eqref{NLS-1} given by Proposition~\ref{BD-GWP}. Let 
\begin{equation} \label{fPseudoConf2} 
u(t,x)=(1+bt)^{-\frac N2}e^{i\frac{b|x|^2}{4(1+bt)}}v\Big( \frac t{1+bt},\frac x{1+bt}\Big),\  t\geq0,\ x\in\R^N .
\end{equation}
It follows that  $u\in C([0,\infty), H^1(\R^N))$ is the solution of~\eqref{NLS-0}  with the initial condition $u_0(x)=e^{i\frac{b|x|^2}{4}}v_0(x)$.
Since $n>\frac N2$, we deduce from~\eqref{asymptotic} in Proposition~\ref{asy-v} that
\begin{equation*}
\|v(t,x)-\omega_0(x)\psi(t,x)e^{ i\theta(t,x)}\|_{L^\infty\cap L^2}\leq C(1-bt)^{\frac {1} {2} }.
\end{equation*}
This proves~\eqref{asy-u}, while~\eqref{decay-u} and~\eqref{decay-l2-u} follow from~\eqref{converge} and~\eqref{decay-l2-v}, respectively. This completes the proof of Theorem~\ref{ethm1}.


\begin{thebibliography}{99}

\bibitem{CCDW}{Cazenave T., Correia S., Dickstein F. and Weissler F.B.:} {A Fujita-type blowup result and low energy scattering for a nonlinear Schr\"o\-din\-ger equation. S\~ao Paulo J. Math. Sci.  {\bf 9} (2015), no. 2, 146--161.} 
\MScN{MR3457455} \DOI{10.1007/s40863-015-0020-6}

\bibitem{CMHZ} {Cazenave T., Han  Z. and Martel Y.:} {Solutions blowing up on any given compact set for a Schr\"o\-din\-ger equation with nonlinear source term. arXiv:1906.02983 [math.AP].} 
\LINK{https://arxiv.org/abs/1906.02983}

\bibitem{CMZ}{Cazenave T., Martel Y. and Zhao L.:} {Finite-time blowup for a Schr\"o\-din\-ger equation with nonlinear source term. Discrete Contin. Dynam. Systems  {\bf 39} (2019), no. 2, 1171--1183.} 
\MScN{MR3918212} \DOI{10.3934/dcds.2019050}

\bibitem{CN1}{Cazenave T. and Naumkin I.:} {Local existence, global existence, and scattering for the nonlinear Schr\"o\-din\-ger equation. Commun. Contemp. Math.  {\bf 19} (2017), no. 2, 1650038, 20 pp.} 
\MScN{MR3611666} {\DOI{10.1142/S0219199716500383}}

 \bibitem{CN2}{Cazenave T. and Naumkin I.:} {Modified scattering for the critical  nonlinear Schr\"o\-din\-ger equation. J. Funct. Anal.  {\bf 274} (2018), no. 2, 402--432.} 
 \MScN{MR3724144} \DOI{10.1016/j.jfa.2017.10.022}
 
\bibitem{CW}{Cazenave T. and Weissler F. B.:} {Rapidly decaying solutions of
the nonlinear Schr\"o\-din\-ger equation, Comm. Math. Phys.  {\bf 147} (1992), 75--100.}
\MScN{MR1171761} \DOI{10.1007/BF02099529}

\bibitem{CrHo}{Cross M.C. and Hohenberg P.C.:} {Pattern formation outside of equilibrium, Rev. Mod. Phys.  {\bf 65} (1993), no. 3, 851--1112.}
\DOI{10.1103/RevModPhys.65.851}

\bibitem{GOV}{Ginibre J., Ozawa T. and Velo G.:} {On the existence of the wave
operators for a class of nonlinear Schr\"o\-din\-ger equations, 
Ann. Inst. H.~Poin\-ca\-r\'e Phys. Th\'eor. {\bf 60} (1994), no. 2,  211--239.}
\MScN{MR1270296} \LINK{http://archive.numdam.org/article/AIHPA_1994__60_2_211_0.pdf}

\bibitem{GV2}{Ginibre J. and Velo G.:} {On a class of nonlinear
Schr\"o\-din\-ger equations. II. Scattering theory, general case, J. Funct. Anal. {\bf
32}, no.~1 (1979), 33--71.}
\MScN{MR0533219} \DOI{10.1016/0022-1236(79)90077-6}

\bibitem{GV1}{Ginibre J. and Velo G.:} {On a class of nonlinear Schr\"o\-din\-ger
equations. III. Special theories in dimensions 1, 2 and 3, Ann. Inst. Henri
Poincar\'e {\bf 28} (1978), 287--316.}
\MScN{MR0498408} \LINK{http://archive.numdam.org/article/AIHPA_1978__28_3_287_0.pdf}

\bibitem{HLN1}{Hayashi N., Li C. and Naumkin P.:} {Time decay for nonlinear dissipative Schr\"o\-din\-ger  equations in optical fields. Adv. Math. Phys. 2016, Art. ID 3702738, 7 pp.}
\MScN{MR3465033} \DOI{10.1155/2016/3702738}

\bibitem{HLN2}{Hayashi N., Li C. and Naumkin P.:} {Upper and lower time decay bounds for solutions of dissipative nonlinear Schr\"o\-din\-ger  equations. Commun. Pure Appl. Anal.  {\bf 16}  (2017), no. 6, 2089--2104.}
\MScN{MR3693873} \DOI{10.3934/cpaa.2017103} 

\bibitem{Kato1}{Kato T.:} {On nonlinear Schr\"o\-din\-ger equations. Ann. Inst. H.~Poin\-ca\-r\'e Phys. Th\'eor. {\bf 46} (1987), no. 1, 113--129.}
\MScN{MR0877998} \LINK{http://www.numdam.org/item?id=AIHPA_1987__46_1_113_0}

\bibitem{Kato2}{Kato T.:} {Nonlinear Schr\"o\-din\-ger equations, in {\it
Schr\"o\-din\-ger Operators   (S\o nderborg, 1988)}, Lecture Notes in Phys. {\bf 345}, Springer, Berlin, 1989, 218--263.}
\MScN{MR1037322} \DOI{10.1007/3-540-51783-9_22}

\bibitem{KiSh}{Kita N. and Shimomura A.:} {Large time behavior of solutions to Schr\"o\-din\-ger  equations with a dissipative nonlinearity for arbitrarily large initial data. J. Math. Soc. Japan  {\bf 61}  (2009), no. 1, 39--64.}
\MScN{MR2272871} \DOI{10.2969/jmsj/06110039}

\bibitem{Mielke}{Mielke A.:} {The Ginzburg-Landau equation in its role as a modulation equation. in {\it Handbook of dynamical systems, Vol. 2}, 759--834, North-Holland, Amsterdam, 2002.}
\MScN{MR1901066} \DOI{10.1016/S1874-575X(02)80036-4}

\bibitem{NaOz}{Nakanishi K. and Ozawa T.:} {Remarks on scattering for nonlinear
Schr\"o\-din\-ger equations, NoDEA Nonlinear Differential Equations Appl. {\bf 9} (2002), no. 1, 45--68.}
\MScN{MR1891695} \DOI{10.1007/s00030-002-8118-9}

\bibitem{Ozawa2}{Ozawa T.:} {Remarks on proofs of conservation
laws for nonlinear Schr\"o\-din\-ger equations. Calc. Var. Partial Differential Equations  {\bf 25} (2006), no. 3, 403--408.}
\MScN{MR2201679} \DOI{10.1007/s00526-005-0349-2}

\bibitem{Shi}{Shimomura A.:} {Asymptotic behavior of solutions for Schr\"o\-din\-ger  equations with dissipative nonlinearities, Comm. Partial Differential Equations  {\bf 31}  (2006),  no. 7-9, 1407--1423.}
\MScN{MR2254620} \DOI{10.1080/03605300600910316}

\bibitem{SteStu}{Stewartson K. and Stuart J. T.:} {A non-linear instability theory for a wave system in plane Poiseuille flow.
J. Fluid Mech.  {\bf 48}  (1971), 529--545.}
\MScN{MR0309420} \DOI{10.1017/S0022112071001733}

\bibitem{Strauss2}{Strauss W.A.:} {Nonlinear scattering theory at low energy: sequel, 
J. Funct. Anal. {\bf 43} (1981), 281--293.} 
\MScN{MR0614228} \DOI{10.1016/0022-1236(81)90063-X}


\end{thebibliography}
\end{document}